\documentclass[10pt,reqno]{amsart}

% Pakages------------------------------------------
\usepackage{amsthm, mathrsfs, amsmath, amstext, amsxtra, amsfonts, dsfont, amssymb}
\usepackage{lmodern}
\usepackage[dvipsnames]{xcolor}
\usepackage[colorlinks, linkcolor=red, citecolor=blue, urlcolor=OliveGreen, pagebackref, hypertexnames=false]{hyperref}

\usepackage[belowskip=2pt,aboveskip=0pt]{caption}
\usepackage{booktabs}
\setlength\intextsep{2pt}
% Count lines -----------------------------------------
%\usepackage[pagewise]{lineno}\linenumbers

% Page formats--------------------------------------
\voffset=0mm  \hoffset=-10mm \textwidth=150mm \textheight=205mm

% Notations-----------------------------------

\newcommand{\R}{\mathbb R}
\newcommand{\C}{\mathbb C}

\newcommand{\eps}{\epsilon}
\newcommand{\sct} {s_{\text{c}}}
\newcommand{\sce} {s_{\emph{c}}}

 %real part
 %imaginary part
\newcommand{\scal}[1]{\left\langle #1 \right\rangle} %scalar product or japanese bracket

\newcommand{\defendproof}{\hfill $\Box$} %end proof notation

% Theorems styles -----------------------------------------
\newtheorem{theorem}{Theorem}[section]
\newtheorem{lem}[theorem]{Lemma} 
\newtheorem{prop}[theorem]{Proposition}
\newtheorem{coro}[theorem]{Corollary} 
\theoremstyle{definition}

\newtheorem{rem}[theorem]{Remark}

% Title------------------------------------------------------
\title[Blow-up solutions mass-critical NLFS]{On blow-up solutions to the focusing mass-critical nonlinear fractional Schr\"odinger equation} 

\author[V. D. Dinh]{Van Duong Dinh}
\address[V. D. Dinh]{Institut de Math\'ematiques de Toulouse UMR5219, Universit\'e Toulouse CNRS, 31062 Toulouse Cedex 9, France}
\email{dinhvan.duong@math.univ-toulouse.fr}

\keywords{Nonlinear fractional Schr\"odinger equation; Blow-up; Concentration; Limiting profile}
\subjclass[2010]{35B44, 35Q55}

\begin{document}

\maketitle
\begin{abstract}
In this paper we study dynamical properties of blow-up solutions to the focusing mass-critical nonlinear fractional Schr\"odinger equation. We establish a profile decomposition and a compactness lemma related to the equation. As a result, we obtain the $L^2$-concentration and the limiting profile with minimal mass of blow-up solutions. 
\end{abstract}

%\tableofcontents
\section{Introduction}
\setcounter{equation}{0}
Consider the Cauchy problem for nonlinear fractional Schr\"odinger equations
\begin{align}
\left\{
\begin{array}{rcl}
i\partial_t u - (-\Delta)^s u &=& \mu |u|^{\alpha} u, \quad \text{on } [0,+\infty) \times \R^d, \\
u(0) &=& u_0, 
\end{array}
\right.
\label{NLFS}
\end{align}
where $u$ is a complex valued function defined on $[0,+\infty) \times \R^d$, $s \in (0,1)$ and $\alpha>0$. The parameter $\mu=1$ (resp. $\mu=-1$) corresponds to the defocusing (resp. focusing) case. The operator $(-\Delta)^s$ is the fractional Laplacian which is the Fourier multiplier by $|\xi|^{2s}$. The fractional Schr\"odinger equation is a fundamental equation of fractional quantum mechanics, which was discovered by Laskin \cite{Laskin} as a result of extending the Feynmann path integral, from the Brownian-like to L\'evy-like quantum mechanical paths. The fractional Schr\"odinger equation also appears in the continuum limit of discrete models with long-range interactions (see e.g. \cite{KirkLenzStaf}) and in the description of Boson stars as well as in water wave dynamics (see e.g. \cite{FrohJonsLenz} or \cite{IonePusa}). In the last decade, the fractional nonlinear Schr\"odinger equation has attracted a lot of interest in mathematics, numerics and physics (see e.g. \cite{BellazziniGeorgievVisciglia, BoulengerHimmelsbachLenzmann, CaiMajdaMcLaughlinTabak, ChoHajaiejHwangOzawa, ChoHajaiejHwangOzawa-stability, ChoLee, ChoOzawaXia,  ChoOzawa, ChoHwangShim, Dinh-fract, FrankLenzmann, FrankLenzmannSilvestre, Feng, GuoZhu, GuoSireWangZhao, HongSire, IonePusa, KleinSparberMarkowich, KriegerLenzmannRaphael, OzawaVisciglia, SunWangYaoZheng, Zhu, PengShi} and references therein).

The equation $(\ref{NLFS})$ enjoys the scaling invariance
\[
u_\lambda(t,x) = \lambda^{\frac{2s}{\alpha}} u(\lambda^{2s} t, \lambda x), \quad \lambda>0. 
\]
A computation shows
\[
\|u_\lambda(0)\|_{\dot{H}^{\gamma}} = \lambda^{\gamma+\frac{2s}{\alpha} - \frac{d}{2}} \|u_0\|_{\dot{H}^\gamma}.
\]
We thus define the critical exponent
\begin{align}
\sct := \frac{d}{2} -\frac{2s}{\alpha}. \label{critical regularity exponent}
\end{align}
The equation $(\ref{NLFS})$ also enjoys the formal conservation laws for the mass and the energy:
\begin{align*}
M(u(t)) &= \int |u(t,x)|^2 dx = M(u_0), \\
E(u(t)) &= \frac{1}{2} \int |(-\Delta)^{s/2} u(t,x)|^2 dx + \frac{\mu}{\alpha+2} \int |u(t,x)|^{\alpha+2}dx=E(u_0).
\end{align*}
The local well-posedness for $(\ref{NLFS})$ in Sobolev spaces was studied in \cite{HongSire} (see also \cite{ChoHajaiejHwangOzawa} for fractional Hartree equations). Note that the unitary group $e^{-it(-\Delta)^s}$ enjoys several types of Strichartz estimates (see e.g. \cite{ChoOzawaXia} or \cite{Dinh-fract} for Strichartz estimates with non-radial data; and \cite{GuoWang}, \cite{Ke} or \cite{ChoLee} for Strichartz estimates with radially symmetric data; and \cite{FangWang} or \cite{ChoOzawa} for weighted Strichartz estimates). For non-radial data, these Strichartz estimates have a loss of derivatives. This makes the study of local well-posedness more difficult and leads to a weak local theory comparing to the standard nonlinear Schr\"odinger equation (see e.g. \cite{HongSire} or \cite{Dinh-fract}). One can remove the loss of derivatives in Strichartz estimates by considering radially symmetric initial data. However, these Strichartz estimates without loss of derivatives require an restriction on the validity of $s$, that is $s \in \left[\frac{d}{2d-1},1\right)$. We refer the reader to Section $\ref{section preliminaries}$ for more details about Strichartz estimates and the local well-posedness in $H^s$ for $(\ref{NLFS})$. 

Recently, Boulenger-Himmelsbach-Lenzmann \cite{BoulengerHimmelsbachLenzmann} proved blow-up criteria for radial $H^s$ solutions to the focusing $(\ref{NLFS})$. More precisely, they proved the following:
\begin{theorem}[Blow-up criteria \cite{BoulengerHimmelsbachLenzmann}] 
	Let $d\geq 2$, $s \in (1/2, 1)$ and $\alpha>0$. Let $u_0 \in H^s$ be radial and assume that the corresponding solution to the focusing $(\ref{NLFS})$ exists on the maximal time interval $[0,T)$. 
	\begin{itemize}
		\item {\bf Mass-critical case}, i.e. $\sce=0$ or $\alpha = \frac{4s}{d}$: If $E(u_0)<0$, then the solution $u$ either blows up in finite time, i.e. $T<+\infty$ or blows up infinite time, i.e. $T=+\infty$ and 
		\[
		\|u(t)\|_{\dot{H}^s} \geq c t^s, \quad \forall t \geq t_*,
		\]
		with some $C>0$ and $t_*>0$ that depend only on $u_0, s$ and $d$. 
		\item {\bf Mass and energy intercritical case}, i.e. $0<\sce<s$ or $\frac{4s}{d}<\alpha<\frac{4s}{d-2s}$: If $\alpha<4s$ and either
		$E(u_0)<0$, or if $E(u_0) \geq 0$, we assume that
		\[
		E^{\sce}(u_0) M^{s-\sce}(u_0) < E^{\sce}(Q) M^{s-\sce}(Q), \quad \|u_0\|_{\dot{H}^s}^{\sce} \|u_0\|_{L^2}^{s-\sce} > \|Q\|^{\sce}_{\dot{H}^s} \|Q\|_{L^2}^{s-\sce},
		\]
		where $Q$ is the unique (modulo symmetries) positive radial solution to the elliptic equation
		\[
		(-\Delta)^s Q + Q - |Q|^\alpha Q=0,
		\]
		then the solution blows up in finite time, i.e. $T<+\infty$. 
		\item {\bf Energy-critical case}, i.e. $\sce=s$ or $\alpha=\frac{4s}{d-2s}$: If $\alpha<4s$ and either
		$E(u_0)<0$, or if $E(u_0) \geq 0$, we assume that
		\[
		E(u_0) <E(W), \quad \|u_0\|_{\dot{H}^s} > \|W\|_{\dot{H}^s},
		\]
		where $W$ is the unique (modulo symmetries) positive radial solution to the elliptic equation
		\[
		(-\Delta)^s W - |W|^{\frac{4s}{d-2s}} W=0, 
		\]
		then the solution blows up in finite time, i.e. $T<+\infty$. 
	\end{itemize}
\end{theorem}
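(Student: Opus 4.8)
The plan is to prove all three cases with one device: a localized virial (Morawetz-type) identity adapted to the nonlocal operator $(-\Delta)^s$, run along the lines of the classical truncated-virial method but with the fractional Laplacian handled through a Balakrishnan-type representation. Fix a radial function $\varphi\in C^\infty(\R^d)$, identified with a function of $r=|x|$, satisfying $\varphi(x)=|x|^2/2$ for $|x|\le1$, $\partial_r^2\varphi\le1$ on $(0,\infty)$, and $\varphi\equiv\mathrm{const}$ for $|x|\ge10$; for $R>0$ set $\varphi_R(x):=R^2\varphi(x/R)$ and
\[
M_{\varphi_R}[u(t)]:=2\,\mathrm{Im}\int_{\R^d}\overline{u(t,x)}\,\nabla\varphi_R(x)\cdot\nabla u(t,x)\,dx.
\]
Since $s>1/2$, a fractional integration by parts together with the interpolation $\|v\|_{\dot H^{1-s}}\lesssim\|v\|_{L^2}^{(2s-1)/s}\|(-\Delta)^{s/2}v\|_{L^2}^{(1-s)/s}$ and mass conservation show that $M_{\varphi_R}[u(t)]$ is well defined with
\[
|M_{\varphi_R}[u(t)]|\le C(\varphi)\,R\bigl(\|u_0\|_{L^2}^2+\|u_0\|_{L^2}^{2-\frac1s}\,\|(-\Delta)^{s/2}u(t)\|_{L^2}^{\frac1s}\bigr),\qquad\tfrac1s<2.
\]
The heart of the matter is the differential identity obtained by differentiating $t\mapsto M_{\varphi_R}[u(t)]$, inserting $(\ref{NLFS})$ and integrating by parts:
\[
\frac{d}{dt}M_{\varphi_R}[u(t)]=4s\|(-\Delta)^{s/2}u(t)\|_{L^2}^2-\frac{2d\alpha}{\alpha+2}\|u(t)\|_{L^{\alpha+2}}^{\alpha+2}+\mathcal A_R[u(t)]+\mathcal B_R[u(t)],
\]
where $\mathcal B_R$ gathers the nonlinear remainder (the contribution of $\Delta\varphi_R-d$, supported in $\{|x|\ge R\}$) and $\mathcal A_R$ the contribution of the commutator $[(-\Delta)^s,\varphi_R]$. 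To make $\mathcal A_R$ tractable for $0<s<1$ I would insert the Balakrishnan formula $(-\Delta)^s=\frac{\sin\pi s}{\pi}\int_0^\infty m^{s-1}\,\frac{-\Delta}{m-\Delta}\,dm$, which replaces the fractional commutator by an $m$-parametrized family of second-order (local) commutators and resolvent estimates; exploiting the radial reduction (so that the relevant Hessian term is $\partial_r^2\varphi\le1$) and only the conserved mass, one gets, for every $\eta\in(0,1)$,
\[
|\mathcal A_R[u(t)]|\le\eta\,\|(-\Delta)^{s/2}u(t)\|_{L^2}^2+C(\eta,\varphi)\,R^{-2s}.
\]
Verifying this commutator estimate --- in particular the convergence of the $m$-integrals and the exact power of $R$ --- is, I expect, the main technical obstacle.

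For $\mathcal B_R$ the key input is the radial Sobolev (Strauss) inequality: for radial $f$ and $d\ge2$, $s>1/2$,
\[
\sup_{x\neq0}|x|^{\frac{d-1}{2}}|f(x)|\le C\,\|f\|_{L^2}^{1-\frac1{2s}}\,\|(-\Delta)^{s/2}f\|_{L^2}^{\frac1{2s}},
\]
which is exactly where the hypotheses $d\ge2$ and $s\in(1/2,1)$ enter. Since $\mathcal B_R$ is controlled by $\int_{|x|\ge R}|u|^{\alpha+2}\le\|u\|_{L^\infty(|x|\ge R)}^\alpha\|u_0\|_{L^2}^2$, this yields $|\mathcal B_R[u(t)]|\le C\,R^{-\frac{(d-1)\alpha}{2}}\|u_0\|_{L^2}^{\alpha(1-\frac1{2s})+2}\|(-\Delta)^{s/2}u(t)\|_{L^2}^{\frac\alpha{2s}}$, and the assumption $\alpha<4s$ (automatic in the mass-critical case, where $\alpha=4s/d\le2s$) forces $\frac\alpha{2s}<2$, so by Young's inequality $|\mathcal B_R[u(t)]|\le\eta\|(-\Delta)^{s/2}u(t)\|_{L^2}^2+C(\eta)R^{-\beta}$ with some $\beta>0$. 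Rewriting the two main terms through mass and energy conservation,
\[
4s\|(-\Delta)^{s/2}u\|_{L^2}^2-\frac{2d\alpha}{\alpha+2}\|u\|_{L^{\alpha+2}}^{\alpha+2}=2d\alpha\,E(u_0)-(d\alpha-4s)\,\|(-\Delta)^{s/2}u(t)\|_{L^2}^2,
\]
and absorbing $\mathcal A_R,\mathcal B_R$ (taking $\eta$ small, then $R$ large) gives the master inequality
\[
\frac{d}{dt}M_{\varphi_R}[u(t)]\le2d\alpha\,E(u_0)-\bigl(d\alpha-4s-o_R(1)\bigr)\|(-\Delta)^{s/2}u(t)\|_{L^2}^2+o_R(1),
\]
where $o_R(1)\to0$ as $R\to\infty$ (depending only on $u_0$).

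In the \textbf{mass-critical case} ($\sct=0$, i.e.\ $d\alpha=4s$) the kinetic term disappears: $\frac{d}{dt}M_{\varphi_R}[u(t)]\le8s\,E(u_0)+\eta\|(-\Delta)^{s/2}u(t)\|_{L^2}^2+o_R(1)$. If $E(u_0)<0$ and the solution is global ($T=+\infty$), then choosing the scale $R=R(t)$ as a suitable power of $\max_{0\le\tau\le t}\|u(\tau)\|_{\dot H^s}$ (so that the error terms stay below $4s|E(u_0)|$ on $[0,t]$) and integrating gives $M_{\varphi_R}[u(t)]\le M_{\varphi_R}[u_0]-4s|E(u_0)|\,t$; comparing with $|M_{\varphi_R}[u(t)]|\lesssim_R1+\|u(t)\|_{\dot H^s}^{1/s}$ and optimizing in $R$ forces $\|u(t)\|_{\dot H^s}\gtrsim t^s$ for $t$ large. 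Otherwise $T<+\infty$. Extracting the sharp power $t^s$, rather than mere unboundedness, is the one point where this case genuinely differs from the others.

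In the \textbf{intercritical case} ($0<\sct<s$) the coefficient $d\alpha-4s>0$ works in our favor. If $E(u_0)<0$, the master inequality already gives $\frac{d}{dt}M_{\varphi_R}[u(t)]\le d\alpha\,E(u_0)<0$ for $R$ large, so $M_{\varphi_R}[u(t)]\to-\infty$; using $\|u(t)\|_{\dot H^s}\gtrsim|M_{\varphi_R}[u(t)]|^s$ (from the bound above, since $1/s<2$) this becomes $\frac{d}{dt}M_{\varphi_R}[u(t)]\le-c\,|M_{\varphi_R}[u(t)]|^{2s}$, and as $2s>1$ the solution of this differential inequality reaches $-\infty$ in finite time, whence $T<+\infty$. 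If $E(u_0)\ge0$, I would invoke the sharp Gagliardo--Nirenberg inequality $\|f\|_{L^{\alpha+2}}^{\alpha+2}\le C_{\mathrm{GN}}\|f\|_{L^2}^{(\alpha+2)-\frac{d\alpha}{2s}}\|(-\Delta)^{s/2}f\|_{L^2}^{\frac{d\alpha}{2s}}$ (optimizer $Q$, constant pinned down by the Pohozaev identities for $Q$): the two hypotheses $E^{\sct}(u_0)M^{s-\sct}(u_0)<E^{\sct}(Q)M^{s-\sct}(Q)$ and $\|u_0\|_{\dot H^s}^{\sct}\|u_0\|_{L^2}^{s-\sct}>\|Q\|_{\dot H^s}^{\sct}\|Q\|_{L^2}^{s-\sct}$ are precisely the conditions under which a continuity/bootstrap argument (using conservation of $M$ and $E$) yields $\|u(t)\|_{\dot H^s}^{\sct}\|u(t)\|_{L^2}^{s-\sct}>\|Q\|_{\dot H^s}^{\sct}\|Q\|_{L^2}^{s-\sct}$ for \emph{all} $t\in[0,T)$; on this set one has the quantitative estimate $4s\|(-\Delta)^{s/2}u(t)\|_{L^2}^2-\frac{2d\alpha}{\alpha+2}\|u(t)\|_{L^{\alpha+2}}^{\alpha+2}\le-\delta\|(-\Delta)^{s/2}u(t)\|_{L^2}^2$ for some $\delta>0$, together with $\|u(t)\|_{\dot H^s}\ge c_0>0$, and the same ODE argument gives $T<+\infty$. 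The \textbf{energy-critical case} ($\sct=s$) is identical, with $Q$ replaced by the Aubin--Talenti function $W$, the Gagliardo--Nirenberg inequality by the sharp Sobolev inequality $\|f\|_{L^{2d/(d-2s)}}\le C_S\|(-\Delta)^{s/2}f\|_{L^2}$, and the hypotheses $E(u_0)<E(W)$, $\|u_0\|_{\dot H^s}>\|W\|_{\dot H^s}$ ensuring (again by continuity plus conservation) that $\|u(t)\|_{\dot H^s}>\|W\|_{\dot H^s}$ for all $t$, which makes $\frac{d}{dt}M_{\varphi_R}[u(t)]$ uniformly negative and $\le-c\,|M_{\varphi_R}[u(t)]|^{2s}$, hence $T<+\infty$. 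The restriction $\alpha<4s$ is used only in the Young-inequality step bounding $\mathcal B_R$.
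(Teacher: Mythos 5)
This theorem is not proved in the paper: it is quoted verbatim from Boulenger--Himmelsbach--Lenzmann \cite{BoulengerHimmelsbachLenzmann}, so the only meaningful comparison is with that reference. Your proposal reconstructs precisely its strategy: the localized virial quantity $M_{\varphi_R}$, its a priori bound by $R\bigl(\|u_0\|_{L^2}^2+\|u_0\|_{L^2}^{2-1/s}\|u\|_{\dot{H}^s}^{1/s}\bigr)$ via interpolation (using $s>1/2$ so that $1/s<2$), the Balakrishnan representation to localize the commutator with $(-\Delta)^s$, the radial Strauss inequality to control the exterior nonlinear term (this is exactly where $d\ge 2$, $s>1/2$ and $\alpha<4s$ enter), and the concluding ODE argument $\frac{d}{dt}M\le -c|M|^{2s}$ with $2s>1$. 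The outline, including the division into the three cases and the role of the sharp Gagliardo--Nirenberg and Sobolev constants in the positive-energy regimes, is the correct one.

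Two caveats keep this from being a proof rather than a program. First, the entire technical content of the theorem is the commutator estimate $|\mathcal{A}_R[u]|\le \eta\|(-\Delta)^{s/2}u\|_{L^2}^2+C(\eta,\varphi)R^{-2s}$, which you assert and explicitly defer; everything downstream is soft once that lemma is in hand, so as written the argument has a genuine gap exactly where you say it does. Second, in the mass-critical case the order of operations matters: once you apply Young's inequality with a fixed $\eta$, the resulting term $\eta\|(-\Delta)^{s/2}u(t)\|_{L^2}^2$ has no negative kinetic term to be absorbed into (since $d\alpha-4s=0$) and cannot be removed by taking $R$ large, so the displayed master inequality is not the one you can integrate there. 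You must keep the errors in the pre-Young form $C R^{-\beta}\|(-\Delta)^{s/2}u\|_{L^2}^{\gamma}$ with $\gamma<2$ and only then choose $R$ in terms of $\sup_{\tau\in[0,t]}\|u(\tau)\|_{\dot{H}^s}$; your later sentence about choosing $R=R(t)$ shows you know this, but the write-up should be reorganized so that the quantitative lower bound $\|u(t)\|_{\dot{H}^s}\gtrsim t^s$ actually follows from the inequality you state.
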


In this paper we are interested in dynamical properties of blow-up solutions in $H^s$ for the focusing mass-critical nonlinear fractional Schr\"odinger equation, i.e. $s \in (0,1) \backslash \{1/2\}$, $\alpha=\frac{4s}{d}$ and $\mu=-1$ in $(\ref{NLFS})$. Before entering some details of our results, let us recall known results about blow-up solutions in $H^1$ for the focusing mass-critical nonlinear Schr\"odinger equation 
\[
i\partial_t v + \Delta v = -|v|^{\frac{4}{d}} v, \quad v(0) = v_0 \in H^1. \tag{mNLS}
\]
The existence of blow-up solutions in $H^1$ for (mNLS) was firstly proved by Glassey \cite{Glassey}, where the author showed that for any negative energy initial data satisfying $|x| v_0 \in L^2$, the corresponding solution blows up in finite time. Ogawa-Tsutsumi \cite{OgawaTsutsumi, OgawaTsutsumi1d} showed the existence of blow-up solutions for negative energy radial data in dimensions $d\geq 2$ and for any negative energy initial data (without radially symmetry) in the one dimensional case. The study of blow-up $H^1$ solution to (mNLS) is connected to the notion of ground state which is the unique (up to symmetries) positive radial solution to the elliptic equation
\[
\Delta R -R + |R|^{\frac{4}{d}} R =0. 
\]
By the variational characteristic of the ground state, Weinstein \cite{Weinstein} showed the structure and formation of singularity of the minimal mass blow-up solution, i.e. $\|v_0\|_{L^2} = \|R\|_{L^2}$. He proved that the blow-up solution remains close to the ground state $R$ up to scaling and phase parameters, and also translation in the non-radial case. Merle-Tsutsumi \cite{MerleTsutsumi}, Tsutsumi \cite{Tsutsumi} and Nava \cite{Nava} proved the $L^2$-concentration of blow-up solutions by using the variational characterization of ground state, that is, there exists $x(t) \in \R^d$ such that for all $r>0$,
\[
\liminf_{t\uparrow T} \int_{|x-x(t)| \leq r} |v(t,x)|^2 dx \geq \int |R(x)|^2 dx, 
\]
where $T$ is the blow-up time. Merle \cite{Merle92, Merle93} used the conformal invariance and compactness argument to characterize the finite time blow-up solutions with minimal mass. More precisely, he proved that up to symmetries of the equation, the only finite time blow-up solution with minimal mass is the pseudo-conformal transformation of the ground state. Hmidi-Keraani \cite{HmidiKeraani} gave a simplified proof of the characterization of blow-up solutions with minimal mass of Merle by means of the profile decomposition and a refined compactness lemma. Merle-Rapha\"el \cite{MerleRaphael04, MerleRaphael05, MerleRaphael06} established sharp blow-up rates, profiles of blow-up solutions by the help of spectral properties.
		
As for (mNLS), the study of blow-up solution to the focusing mass-critical nonlinear fractional Schr\"odinger equation is closely related to the notion of ground state which is the unique (modulo symmetries) positive radial solution of the elliptic equation
\begin{align}
(-\Delta)^s Q+ Q-|Q|^{\frac{4s}{d}} Q=0. \label{elliptic equation}
\end{align}
The existence and uniqueness (up to symmetries) of ground state $Q \in H^s$ for $(\ref{elliptic equation})$ were recently shown in \cite{FrankLenzmann} and \cite{FrankLenzmannSilvestre}. In \cite{BoulengerHimmelsbachLenzmann, FrankLenzmannSilvestre}, the authors showed the sharp Gagliardo-Nirenberg inequality 
\begin{align}
\|f\|^{\frac{4s}{d}+2}_{L^{\frac{4s}{d}+2}} \leq C_{\text{GN}} \|f\|^{2}_{\dot{H}^s} \|f\|^{\frac{4s}{d}}_{L^2}, \label{sharp gagliardo nirenberg inequality masscritical NLFS}
\end{align}
where
\[
C_{\text{GN}}=\frac{2s+d}{d} \|Q\|^{-\frac{4s}{d}}_{L^2}.
\]
Using this sharp Gagliardo-Nirenberg inequality together with the conservation of mass and energy, it is easy to see that if $u_0 \in H^s$ satisfies
\[
\|u_0\|_{L^2} <\|Q\|_{L^2},
\]
then the corresponding solution exists globally in time. This implies that $\|Q\|_{L^2}$ is the critical mass for the formation of singularities. 

To study blow-up dynamics for data in $H^s$, we establish the profile decomposition for bounded sequences in $H^s$ in the same spirit of \cite{HmidiKeraani}. With the help of this profile decomposition, we prove a compactness lemma related to the focusing mass-critical (NLFS).
\begin{theorem}[Compactness lemma] \label{theorem compactness lemma masscritical NLFS} Let $d\geq 1$ and $0<s<1$. Let $(v_n)_{n\geq 1}$ be a bounded sequence in $H^s$ such that
	\[
	\limsup_{n\rightarrow \infty} \|v_n\|_{\dot{H}^s} \leq M, \quad \limsup_{n\rightarrow \infty} \|v_n\|_{L^{\frac{4s}{d}+2}} \geq m.
	\]
	Then there exists a sequence $(x_n)_{n\geq 1}$ in $\R^d$ such that up to a subsequence,
	\[
	v_n(\cdot + x_n) \rightharpoonup V \text{ weakly in } H^s,
	\]
	for some $V \in H^s$ satisfying
	\begin{align}
	\|V\|^{\frac{4s}{d}}_{L^2} \geq \frac{d}{d+2s} \frac{ m^{\frac{4s}{d}+2}}{M^2} \|Q\|_{L^2}^{\frac{4s}{d}}, \label{lower bound critical sobolev}
	\end{align}
	where $Q$ is the unique solution to the elliptic equation $(\ref{elliptic equation})$.
\end{theorem}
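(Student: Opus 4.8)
The plan is to obtain the statement as a direct consequence of the profile decomposition for bounded $H^s$-sequences (which we establish separately, in the spirit of Hmidi--Keraani \cite{HmidiKeraani}) together with the sharp Gagliardo--Nirenberg inequality $(\ref{sharp gagliardo nirenberg inequality masscritical NLFS})$. We may assume $m>0$, since if $m=0$ the bound $(\ref{lower bound critical sobolev})$ holds trivially with $V$ the weak $H^s$-limit of a subsequence of $(v_n)$ and $x_n=0$.

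First I would invoke the profile decomposition: after passing to a subsequence, there are profiles $(V^j)_{j\geq 1}$ in $H^s$ and translation cores $(x_n^j)_n$ in $\R^d$, with $|x_n^j-x_n^k|\to\infty$ as $n\to\infty$ whenever $j\neq k$, such that for every $\ell\geq 1$
\[
v_n=\sum_{j=1}^\ell V^j(\cdot-x_n^j)+v_n^\ell,\qquad v_n(\cdot+x_n^j)\rightharpoonup V^j\ \text{weakly in}\ H^s,
\]
together with the asymptotic Pythagorean expansions
\[
\|v_n\|_{L^2}^2=\sum_{j=1}^\ell\|V^j\|_{L^2}^2+\|v_n^\ell\|_{L^2}^2+o_n(1),\qquad \|v_n\|_{\dot H^s}^2=\sum_{j=1}^\ell\|V^j\|_{\dot H^s}^2+\|v_n^\ell\|_{\dot H^s}^2+o_n(1),
\]
the smallness of the remainder in the critical Lebesgue norm,
\[
\lim_{\ell\to\infty}\ \limsup_{n\to\infty}\ \|v_n^\ell\|_{L^{\frac{4s}{d}+2}}=0,
\]
and the almost-orthogonal splitting of the nonlinear functional,
\[
\lim_{\ell\to\infty}\ \limsup_{n\to\infty}\ \Big|\,\|v_n\|_{L^{\frac{4s}{d}+2}}^{\frac{4s}{d}+2}-\sum_{j=1}^\ell\|V^j\|_{L^{\frac{4s}{d}+2}}^{\frac{4s}{d}+2}\,\Big|=0.
\]

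Next I would estimate each profile via $(\ref{sharp gagliardo nirenberg inequality masscritical NLFS})$, namely $\|V^j\|_{L^{\frac{4s}{d}+2}}^{\frac{4s}{d}+2}\leq C_{\text{GN}}\|V^j\|_{\dot H^s}^2\|V^j\|_{L^2}^{\frac{4s}{d}}$, and sum over $j$:
\[
\sum_{j\geq 1}\|V^j\|_{L^{\frac{4s}{d}+2}}^{\frac{4s}{d}+2}\leq C_{\text{GN}}\Big(\sup_{j\geq 1}\|V^j\|_{L^2}^{\frac{4s}{d}}\Big)\sum_{j\geq 1}\|V^j\|_{\dot H^s}^2\leq C_{\text{GN}}\,M^2\,\sup_{j\geq 1}\|V^j\|_{L^2}^{\frac{4s}{d}},
\]
where the last inequality uses $\sum_j\|V^j\|_{\dot H^s}^2\leq\limsup_n\|v_n\|_{\dot H^s}^2\leq M^2$, a consequence of the $\dot H^s$-expansion. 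Since $\sum_j\|V^j\|_{L^2}^2\leq\limsup_n\|v_n\|_{L^2}^2<\infty$, the numbers $\|V^j\|_{L^2}$ tend to $0$, so the supremum above is attained at some index $j_0$ (and is positive, since $m>0$ forces at least one nonzero profile). Setting $V:=V^{j_0}$ and $x_n:=x_n^{j_0}$ we have $v_n(\cdot+x_n)\rightharpoonup V$ weakly in $H^s$, and combining the two displayed chains with $\limsup_n\|v_n\|_{L^{\frac{4s}{d}+2}}\geq m$ gives $m^{\frac{4s}{d}+2}\leq C_{\text{GN}}M^2\|V\|_{L^2}^{\frac{4s}{d}}$; inserting $C_{\text{GN}}=\frac{d+2s}{d}\|Q\|_{L^2}^{-\frac{4s}{d}}$ yields exactly $(\ref{lower bound critical sobolev})$.

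The genuine difficulty is not in this assembly but in the profile decomposition itself, above all the two orthogonality statements: the vanishing of $\|v_n^\ell\|_{L^{\frac{4s}{d}+2}}$ as $\ell\to\infty$ and the almost-orthogonal splitting of the $L^{\frac{4s}{d}+2}$-norm. The first requires a refined Sobolev-type embedding, of the form that a bounded $H^s$-sequence with $\liminf_n\|v_n\|_{L^{\frac{4s}{d}+2}}>0$ possesses, after a suitable translation, a nonzero weak $H^s$-limit; one then extracts profiles iteratively, using the Pythagorean identities to control the $\dot H^s$-mass of the successive profiles and to guarantee that the procedure exhausts the critical norm. The second requires a Brezis--Lieb type lemma in $L^{\frac{4s}{d}+2}$ together with the fact that cross terms coming from cores with $|x_n^j-x_n^k|\to\infty$ are negligible in every intermediate $L^p$-norm. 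These are the places where care is needed; once they are in hand, the above is bookkeeping.
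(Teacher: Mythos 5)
Your argument is correct and follows essentially the same route as the paper: profile decomposition, sharp Gagliardo--Nirenberg applied profile by profile, the bound $\sum_j\|V^j\|_{\dot{H}^s}^2\le M^2$, and attainment of the supremum at some $j_0$. The only difference is bookkeeping: you fold the $L^{\frac{4s}{d}+2}$ decoupling and the weak convergences $v_n(\cdot+x_n^{j})\rightharpoonup V^{j}$ into the statement of the profile decomposition, whereas the paper's version of that theorem omits both and instead establishes, inside the proof of the compactness lemma, the one-sided decoupling $m^{\frac{4s}{d}+2}\le\sum_j\|V^j\|_{L^{\frac{4s}{d}+2}}^{\frac{4s}{d}+2}$ via an elementary inequality plus the divergence of the cores, and identifies the weak limit of $v_n(\cdot+x_n^{j_0})$ with $V^{j_0}$ by a uniqueness-of-weak-limit argument showing the remainder's weak limit vanishes.
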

Note that the lower bound on the $L^2$-norm of $V$ is optimal. Indeed, if we take $v_n = Q$, then we get the identity. 

As a consequence of this compactness lemma, we show that the $L^2$-norm of blow-up solutions must concentrate by an amount which is bounded from below by $\|Q\|_{L^2}$ at the blow-up time. More precisely, we prove the following result.
\begin{theorem} [Blow-up concentration] \label{theorem mass concentration NLFS}
	Let 
	\begin{align}
	\renewcommand{\arraystretch}{1.3}
	\left\{
	\begin{array}{l c l l}
	d=1, & s \in \left(\frac{1}{3}, 1 \right) \backslash \left\{\frac{1}{2}\right\}, & \alpha= 4s, & u_0 \in H^s \text{ non-radial}, \\
	d=2, & s\in \left(\frac{1}{2},1\right), & \alpha=2s, & u_0 \in H^s \text{ non-radial}, \\
	d=3, & s\in \left[\frac{3}{5}, \frac{3}{4}\right], & \alpha = \frac{4s}{3} & u_0 \in H^s \text{ radial}, \\
	d=3, & s \in \left(\frac{3}{4},1\right), & \alpha=\frac{4s}{3}, & u_0 \in H^s \text{ non-radial}, \\
	d\geq 4, & s\in \left[\frac{d}{2d-1}, 1\right), & \alpha = \frac{4s}{d}, & u_0 \in H^s \text{ radial}. 
	\end{array}
	\right.
	\label{assumption d s alpha mass concentration NLFS}
	\end{align}
	Assume that the corresponding solution $u$ to $(\ref{NLFS})$ blows up at finite time $0<T<+\infty$. Let $a(t)>0$ be such that 
	\begin{align}
	a(t) \|u(t)\|_{\dot{H}^s}^{\frac{1}{s}} \rightarrow \infty, \label{concentration assumption masscritical NLFS}
	\end{align}
	as $t\uparrow T$. Then there exists $x(t) \in \R^d$ such that
	\begin{align}
	\liminf_{t\uparrow T} \int_{|x-x(t)| \leq a(t)} |u(t,x)|^2 dx \geq \int |Q(x)|^2dx, \label{mass concentration NLFS}
	\end{align}
	where $Q$ is the unique solution to $(\ref{elliptic equation})$.
\end{theorem}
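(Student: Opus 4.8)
The plan is to transcribe the profile‑decomposition / concentration‑compactness argument of Hmidi--Keraani into the fractional setting, with Theorem \ref{theorem compactness lemma masscritical NLFS} as the central input. First I would recall that, under each of the assumptions in $(\ref{assumption d s alpha mass concentration NLFS})$, the Cauchy problem $(\ref{NLFS})$ is locally well-posed in $H^s$ --- by the loss-free Strichartz estimates for radial data ($s\ge d/(2d-1)$) and by the available non-radial theory in dimensions $1$ and $2$, both recalled in Section \ref{section preliminaries} --- so that the blow-up alternative applies: since $u$ blows up at $T<\infty$ and $M(u(t))\equiv M(u_0)$ is conserved, one gets $\|u(t)\|_{\dot H^s}\to\infty$ as $t\uparrow T$.

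Fix an arbitrary sequence $t_n\uparrow T$ and set $\rho_n:=\|u(t_n)\|_{\dot H^s}^{-1/s}$, so $\rho_n\to 0$. Define the rescaled functions $v_n(x):=\rho_n^{d/2}u(t_n,\rho_n x)$, which are mass-preserving, hence $\|v_n\|_{L^2}=\|u_0\|_{L^2}$ and $\|v_n\|_{\dot H^s}=\rho_n^{s}\|u(t_n)\|_{\dot H^s}=1$. The mass-critical scaling gives $\|v_n\|_{L^{\alpha+2}}^{\alpha+2}=\rho_n^{2s}\|u(t_n)\|_{L^{\alpha+2}}^{\alpha+2}$, so multiplying the conserved identity $E(u(t_n))=E(u_0)$ by $\rho_n^{2s}$ and using $\alpha=4s/d$ yields
\[
\tfrac{1}{\alpha+2}\|v_n\|_{L^{\alpha+2}}^{\alpha+2}=\tfrac12\|v_n\|_{\dot H^s}^2-\rho_n^{2s}E(u_0)=\tfrac12-\rho_n^{2s}E(u_0)\longrightarrow\tfrac12 .
\]
Thus $(v_n)$ is bounded in $H^s$ with $\limsup_n\|v_n\|_{\dot H^s}\le M:=1$ and $\limsup_n\|v_n\|_{L^{\alpha+2}}\ge m$ where $m^{\alpha+2}=(\alpha+2)/2=(d+2s)/d$. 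Theorem \ref{theorem compactness lemma masscritical NLFS} then provides, after passing to a subsequence, points $x_n\in\R^d$ and $V\in H^s$ with $v_n(\cdot+x_n)\rightharpoonup V$ weakly in $H^s$, and by this precise choice of $M,m$ the bound $(\ref{lower bound critical sobolev})$ collapses to $\|V\|_{L^2}^{\alpha}\ge\frac{d}{d+2s}\,\frac{m^{\alpha+2}}{M^2}\,\|Q\|_{L^2}^{\alpha}=\|Q\|_{L^2}^{\alpha}$, i.e.\ $\|V\|_{L^2}\ge\|Q\|_{L^2}$.

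Next, by the compact embedding $H^s(\R^d)\hookrightarrow L^2_{\mathrm{loc}}(\R^d)$, the weak convergence upgrades to $v_n(\cdot+x_n)\to V$ strongly in $L^2(B_r)$ for each fixed $r>0$. Undoing the scaling with $x(t_n):=\rho_n x_n$ gives the exact identity $\int_{B_r}|v_n(x+x_n)|^2\,dx=\int_{|y-x(t_n)|\le\rho_n r}|u(t_n,y)|^2\,dy$, which therefore converges to $\int_{B_r}|V|^2\,dx$. Since the hypothesis $(\ref{concentration assumption masscritical NLFS})$ reads $a(t_n)/\rho_n=a(t_n)\|u(t_n)\|_{\dot H^s}^{1/s}\to\infty$, we have $a(t_n)\ge\rho_n r$ for $n$ large, hence $\liminf_n\int_{|y-x(t_n)|\le a(t_n)}|u(t_n,y)|^2\,dy\ge\int_{B_r}|V|^2\,dx$; letting $r\to\infty$ gives $\liminf_n\int_{|y-x(t_n)|\le a(t_n)}|u(t_n,y)|^2\,dy\ge\|V\|_{L^2}^2\ge\|Q\|_{L^2}^2$. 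Finally, put $g(t):=\sup_{y\in\R^d}\int_{|x-y|\le a(t)}|u(t,x)|^2\,dx$ (attained since $u(t)\in L^2$). The above shows every $t_n\uparrow T$ has a subsequence along which $\liminf g(t_n)\ge\|Q\|_{L^2}^2$; choosing $(t_n)$ with $g(t_n)\to\liminf_{t\uparrow T}g(t)$ forces $\liminf_{t\uparrow T}g(t)\ge\|Q\|_{L^2}^2$, and selecting $x(t)$ to realize $g(t)$ yields $(\ref{mass concentration NLFS})$.

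The genuinely delicate points I anticipate are: (i) establishing the blow-up alternative $\|u(t)\|_{\dot H^s}\to\infty$ in each case of $(\ref{assumption d s alpha mass concentration NLFS})$, which is exactly where the constraints on $(d,s)$ and the radial/non-radial split enter through the loss-free versus lossy Strichartz estimates of Section \ref{section preliminaries}; and (ii) the bookkeeping converting the scale-$\rho_n r$ concentration into concentration at the prescribed scale $a(t)$, together with the diagonal argument passing from the subsequential statement to $\liminf_{t\uparrow T}$. Everything else is a direct adaptation of the Hmidi--Keraani scheme, with Theorem \ref{theorem compactness lemma masscritical NLFS} and conservation of energy arranged so that the normalization constants match precisely.
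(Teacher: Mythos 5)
Your proposal is correct and follows essentially the same route as the paper: rescale at the $\dot H^s$-normalizing scale $\lambda(t_n)\sim\|u(t_n)\|_{\dot H^s}^{-1/s}$, use energy conservation to pin down the limit of $\|v_n\|_{L^{\alpha+2}}$, feed the resulting $(m,M)$ into Theorem \ref{theorem compactness lemma masscritical NLFS} so that the lower bound collapses to $\|V\|_{L^2}\ge\|Q\|_{L^2}$, undo the scaling on balls, and use $a(t_n)/\lambda_n\to\infty$ plus the arbitrariness of $(t_n)$. The only (immaterial) differences are your normalization $\|v_n\|_{\dot H^s}=1$ versus the paper's $\|v_n\|_{\dot H^s}=\|Q\|_{\dot H^s}$, and your use of the compact embedding $H^s\hookrightarrow L^2_{\mathrm{loc}}$ where the paper only invokes weak lower semicontinuity of the $L^2(B_R)$-norm.
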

\begin{rem} \label{rem mass concentration NLFS}
	\begin{itemize}
		\item The condition $(\ref{assumption d s alpha mass concentration NLFS})$ comes from the local theory (see Table \ref{table}).
		\item By the blow-up rate given in Corollary $\ref{coro blowup rate NLFS}$, we have 
		\[
		\|u(t)\|_{\dot{H}^s} > \frac{C}{\sqrt{T-t}},
		\]
		for $t\uparrow T$. Rewriting
		\begin{align*}
		\frac{1}{a(t) \|u(t)\|_{\dot{H}^s}^{\frac{1}{s}}} = \frac{\sqrt[2s]{T-t}}{a(t)} \frac{1}{\left(\sqrt{T-t}\|u(t)\|_{\dot{H}^s}\right)^{\frac{1}{s}}} <C\frac{\sqrt[2s]{T-t}}{a(t)},
		\end{align*}
		we see that any function $a(t)>0$ satisfying $\frac{\sqrt[2s]{T-t}}{a(t)} \rightarrow 0$ as $t\uparrow T$ fulfills the conditions of Theorem $\ref{theorem mass concentration NLFS}$. 
	\end{itemize}
\end{rem}
Finally, we show the limiting profile of blow-up solutions with minimal mass $\|Q\|_{L^2}$. More precisely, we show that up to symmetries of the equation, the ground state $Q$ is the profile for blow-up solutions with minimal mass.
\begin{theorem}[Limiting profile with minimal mass] \label{theorem limiting profile minimal mass NLFS}
	Let $d, s, \alpha$ and $u_0$ be as in $(\ref{assumption d s alpha mass concentration NLFS})$. Assume that the corresponding solution $u$ to $(\ref{NLFS})$ blows up at finite time $0<T<+\infty$. If $\|u_0\|_{L^2}=\|Q\|_{L^2}$, then there exist $\theta(t)\in \R$, $\lambda(t)>0$ and $x(t) \in \R^d$ such that 
	\[
	e^{i\theta(t)}  \lambda^{\frac{d}{2}}(t) u(t, \lambda(t) \cdot + x(t)) \rightarrow Q \text{ strongly in } H^s,
	\]
	as $t\uparrow T$.
\end{theorem}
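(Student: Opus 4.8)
The plan is to adapt the variational compactness argument of Hmidi--Keraani \cite{HmidiKeraani} to the fractional setting, using the compactness lemma (Theorem~\ref{theorem compactness lemma masscritical NLFS}), the sharp Gagliardo--Nirenberg inequality $(\ref{sharp gagliardo nirenberg inequality masscritical NLFS})$, and the characterization of its optimizers. Since the assertion of the theorem is equivalent to
\[
\inf_{\theta \in \R,\ \lambda > 0,\ x \in \R^d} \ \|e^{i\theta}\lambda^{d/2} u(t,\lambda\cdot + x) - Q\|_{H^s} \longrightarrow 0 \quad \text{as } t \uparrow T,
\]
it suffices, by a standard contradiction argument, to establish the following \emph{sequential} version: for every sequence $t_n \uparrow T$ there are parameters $\theta_n \in \R$, $\lambda_n > 0$, $x_n \in \R^d$ such that, along a subsequence, $e^{i\theta_n}\lambda_n^{d/2} u(t_n, \lambda_n \cdot + x_n) \to Q$ strongly in $H^s$.

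So fix $t_n \uparrow T$. Under the hypotheses $(\ref{assumption d s alpha mass concentration NLFS})$ the problem is locally well posed in $H^s$ with the usual blow-up alternative, so finite-time blow-up together with conservation of mass gives $\|u(t_n)\|_{\dot H^s} \to \infty$ (see also Remark~\ref{rem mass concentration NLFS}). Set $\rho_n := \|u(t_n)\|_{\dot H^s}^{-1/s} \to 0$ and $v_n(x) := \rho_n^{d/2} u(t_n, \rho_n x)$. By conservation of mass $\|v_n\|_{L^2} = \|u(t_n)\|_{L^2} = \|Q\|_{L^2}$, the scaling has been chosen so that $\|v_n\|_{\dot H^s} = 1$, and hence $(v_n)$ is bounded in $H^s$. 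Dividing the conserved energy identity $E(u(t_n)) = E(u_0)$ by $\|u(t_n)\|_{\dot H^s}^2 \to \infty$ yields $\|u(t_n)\|_{L^{\frac{4s}{d}+2}}^{\frac{4s}{d}+2}/\|u(t_n)\|_{\dot H^s}^2 \to \frac12\left(\frac{4s}{d}+2\right) = \frac{d+2s}{d}$, and since — precisely because $\alpha = \frac{4s}{d}$ — this quotient is invariant under $f\mapsto \rho^{d/2}f(\rho\,\cdot)$ for every $\rho>0$, the same limit holds for $v_n$, so $\|v_n\|_{L^{\frac{4s}{d}+2}}^{\frac{4s}{d}+2} \to \frac{d+2s}{d}$ (recall $\|v_n\|_{\dot H^s}=1$). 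Hence Theorem~\ref{theorem compactness lemma masscritical NLFS} applies with $M = 1$ and $m^{\frac{4s}{d}+2} = \frac{d+2s}{d}$; since $\frac{d}{d+2s}\cdot\frac{m^{\frac{4s}{d}+2}}{M^2} = 1$, it produces a sequence $(x_n) \subset \R^d$ and $V \in H^s$ with $v_n(\cdot + x_n) \rightharpoonup V$ weakly in $H^s$ (up to a subsequence) and $\|V\|_{L^2} \ge \|Q\|_{L^2}$.

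Next I would bootstrap to strong convergence. Weak lower semicontinuity of the $L^2$-norm and $\|v_n(\cdot+x_n)\|_{L^2} = \|Q\|_{L^2}$ force $\|V\|_{L^2} = \|Q\|_{L^2}$, hence $v_n(\cdot + x_n) \to V$ strongly in $L^2$; since $(v_n)$ is bounded in $H^s$ and the exponent $\frac{4s}{d}+2$ is $H^s$-subcritical, interpolation upgrades this to strong convergence in $L^{\frac{4s}{d}+2}$, so $\|V\|_{L^{\frac{4s}{d}+2}}^{\frac{4s}{d}+2} = \frac{d+2s}{d}$. Inserting $V$ into $(\ref{sharp gagliardo nirenberg inequality masscritical NLFS})$ and using $\|V\|_{L^2} = \|Q\|_{L^2}$ gives $\frac{d+2s}{d} = \|V\|_{L^{\frac{4s}{d}+2}}^{\frac{4s}{d}+2} \le \frac{d+2s}{d}\|V\|_{\dot H^s}^2$, i.e. $\|V\|_{\dot H^s} \ge 1$, while weak lower semicontinuity yields $\|V\|_{\dot H^s} \le \liminf_n \|v_n\|_{\dot H^s} = 1$; therefore $\|V\|_{\dot H^s} = 1$ and, by weak convergence together with norm convergence in the Hilbert space $\dot H^s$, $v_n(\cdot+x_n) \to V$ strongly in $\dot H^s$, hence in $H^s$. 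Moreover $V$ realizes equality in $(\ref{sharp gagliardo nirenberg inequality masscritical NLFS})$ with $\|V\|_{L^2} = \|Q\|_{L^2}$. By the characterization of the optimizers of $(\ref{sharp gagliardo nirenberg inequality masscritical NLFS})$ — obtained by passing to $|V|$ via the diamagnetic inequality $\|\,|f|\,\|_{\dot H^s} \le \|f\|_{\dot H^s}$ and invoking the uniqueness (modulo symmetries) of the ground state for $(\ref{elliptic equation})$ proved in \cite{FrankLenzmann, FrankLenzmannSilvestre} — there are $\theta_0 \in \R$ and $y_0 \in \R^d$ such that $V(x) = e^{i\theta_0} \mu^{d/2} Q(\mu(x - y_0))$ with $\mu = \|Q\|_{\dot H^s}^{-1/s}$ (the two norm constraints pinning down $\mu$). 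Rewriting $v_n(\cdot + x_n) \to V$ in $H^s$ through the $H^s$-isomorphism $g \mapsto \mu^{-d/2} g(\mu^{-1}\cdot + y_0)$ then gives $e^{i\theta_n}\lambda_n^{d/2} u(t_n, \lambda_n \cdot + x_n') \to Q$ strongly in $H^s$ with $\theta_n := -\theta_0$, $\lambda_n := \rho_n/\mu \to 0$ and $x_n' := \rho_n(x_n + y_0)$, which is exactly the sequential statement.

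I expect the main obstacle to be this identification of $V$: having at hand a clean description of the equality case in $(\ref{sharp gagliardo nirenberg inequality masscritical NLFS})$ requires the reduction to non-negative functions and, more seriously, the nontrivial uniqueness theorem of \cite{FrankLenzmann, FrankLenzmannSilvestre} for the fractional elliptic equation $(\ref{elliptic equation})$; by contrast, the choice of rescaling, the application of the compactness lemma, the semicontinuity and interpolation steps, and the passage from sequences to the $t\uparrow T$ limit are all comparatively routine.
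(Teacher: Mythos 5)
Your proposal is correct and follows essentially the same route as the paper: the same rescaling (normalized so that $\|v_n\|_{\dot H^s}=1$ rather than $\|Q\|_{\dot H^s}$, which is immaterial), the same application of Theorem~\ref{theorem compactness lemma masscritical NLFS}, the same bootstrap to strong $H^s$ convergence via weak lower semicontinuity and the sharp Gagliardo--Nirenberg inequality, and the same identification of the limit profile. The only cosmetic difference is that you identify $V$ directly as an optimizer of $(\ref{sharp gagliardo nirenberg inequality masscritical NLFS})$, whereas the paper packages the equivalent condition $E(V)=0$, $\|V\|_{L^2}=\|Q\|_{L^2}$ into Lemma~\ref{lem characterization minimal mass NLFS}, both resting on the Frank--Lenzmann--Silvestre characterization.
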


After submitting this manuscript, we are informed that a recent work of Feng \cite{Feng} has considered the fractional nonlinear Schr\"odinger equation with combined power-types of nonlinearities. He studied blow-up dynamics in the case of a $L^2$-critical nonlinear term perturbed by a $L^2$-subcritical term. 

The paper is oganized as follows. In Section $\ref{section preliminaries}$, we recall Strichartz estimates for the fractional Schr\"odinger equation and the local well-posedness for $(\ref{NLFS})$ in non-radial and radial $H^s$ initial data. In Section $\ref{section profile decomposition}$, we show the profile decomposition for bounded sequences in $H^s$ and prove a compactness lemma related to the focusing mass-critical $(\ref{NLFS})$. The $L^2$-concentration of blow-up solutions is proved in Section $\ref{section blowup concentration}$. Finally, we show the limiting profile of blow-up solutions with minimal mass in Section $\ref{section limiting profile}$.
\section{Preliminaries} \label{section preliminaries}
\setcounter{equation}{0}
\subsection{Strichartz estimates}
In this subsection, we recall Strichartz estimates for the fractional Schr\"odinger equation. Let $I \subset \R$ and $p,q \in [1, \infty]$. We define the Strichartz norm
\[
\|f\|_{L^p(I, L^q)} := \Big( \int_I \Big( \int_{\R^d} |f(t,x)|^q dx \Big)^{\frac{p}{q}}\Big)^{\frac{1}{p}},
\]
with a usual modification when either $p$ or $q$ are infinity. We have three-types of Strichartz estimates for the fractional Schr\"odinger equation:
\begin{itemize}
	\item For general data (see e.g. \cite{ChoOzawaXia} or \cite{Dinh-fract}): the following estimates hold for $d\geq 1$ and $s \in (0,1) \backslash \{1/2\}$,
	\begin{align}
	\|e^{-it(-\Delta)^s} \psi\|_{L^p(\R, L^q)} &\lesssim \||\nabla|^{\gamma_{p,q}} \psi\|_{L^2}, \label{strichartz estimate homogeneous}\\
	\Big| \int_0^t e^{-i(t-\tau) (-\Delta)^s} f(\tau) d\tau\Big|_{L^p(\R, L^q)} &\lesssim \||\nabla|^{\gamma_{p,q} -\gamma_{a', b'} - 2s} f\|_{L^{a'}(\R, L^{b'})}, \label{strichartz estimate inhomogeneous}
	\end{align}
	where $(p,q)$ and $(a,b)$ are Schr\"odinger admissible, i.e.
	\[
	p \in [2,\infty], \quad q \in [2, \infty), \quad (p,q, d) \ne (2,\infty, 2), \quad \frac{2}{p} + \frac{d}{q} \leq \frac{d}{2},
	\]
	and 
	\[
	\gamma_{p,q} = \frac{d}{2} -\frac{d}{q} -\frac{2s}{p},
	\]
	similarly for $\gamma_{a', b'}$. Here $(a, a')$ and $(b,b')$ are conjugate pairs. It is worth noticing that for $s \in (0,1) \backslash \{1/2\}$ the admissible condition $\frac{2}{p} +\frac{d}{q} \leq \frac{d}{2}$ implies $\gamma_{p,q}>0$ for all admissible pairs $(p,q)$ except $(p,q)=(\infty, 2)$. This means that the above Strichartz estimates have a loss of derivatives. In the local theory of the nonlinear fractional Schr\"odinger equation, this loss of derivatives makes the problem more difficult, and leads to a weak local well-posedness result comparing to the nonlinear Schr\"odinger equation (see Subsection $\ref{subsection local well posedness}$). 
	\item For radially symmetric data (see e.g. \cite{Ke}, \cite{GuoWang} or \cite{ChoLee}): the estimates $(\ref{strichartz estimate homogeneous})$ and $(\ref{strichartz estimate inhomogeneous})$ hold true for $d\geq 2, s \in (0,1) \backslash \{1/2\}$ and $(p,q), (a,b)$ satisfy the radial Sch\"odinger admissible condition:
	\begin{align*}
	p \in [2, \infty], \quad q \in [2, \infty), \quad (p,q) \ne \left(2, \frac{4d-2}{2d-3}\right), \quad \frac{2}{p} + \frac{2d-1}{q} \leq \frac{2d-1}{2}.
	\end{align*}
	Note that the admissible condition $\frac{2}{p} + \frac{2d-1}{q} \leq \frac{2d-1}{2}$ allows us to choose $(p,q)$ so that $\gamma_{p,q} =0$. More precisely, we have for $d\geq 2$ and $\frac{d}{2d-1}\leq s<1$ \footnote{This condition follows by pluging $\gamma_{p,q}=0$ to $\frac{2}{p}+\frac{2d-1}{q} \leq \frac{2d-1}{2}$.},
	\begin{align}
	\|e^{-it(-\Delta)^s} \psi\|_{L^p(\R, L^q)} &\lesssim \|\psi\|_{L^2}, \label{radial strichartz estimate homogeneous}\\
	\Big| \int_0^t e^{-i(t-\tau) (-\Delta)^s} f(\tau) d\tau\Big|_{L^p(\R, L^q)} &\lesssim \| f\|_{L^{a'}(\R, L^{b'})}, \label{radial strichartz estimate inhomogeneous}
	\end{align}
	where $\psi$ and $f$ are radially symmetric and $(p,q), (a,b)$ satisfy the fractional admissible condition, 
	\begin{align}
	p \in [2,\infty], \quad q \in [2, \infty), \quad (p,q) \ne \left(2, \frac{4d-2}{2d-3}\right), \quad \frac{2s}{p} + \frac{d}{q} = \frac{d}{2}. \label{fractional admissible}
	\end{align}
	These Strichartz estimates with no loss of derivatives allow us to give a similar local well-posedness result as for the nonlinear Schr\"odinger equation (see again Subsection $\ref{subsection local well posedness}$). 
	\item Weighted Strichartz estimates (see e.g. \cite{FangWang} or \cite{ChoOzawa}): for $0<\nu_1<\frac{d-1}{2}$ and $\rho_1 \leq \frac{d-1}{2}-\nu_1$,
	\begin{align}
	\||x|^{\nu_1} |\nabla|^{\nu_1-\frac{d}{2}} \scal{\nabla_\omega}^{\rho_1} e^{-it(-\Delta)^s} \psi\|_{L^\infty_t L^\infty_r L^2_\omega} \lesssim \|\psi\|_{L^2}, \label{weighted strichartz estimate 1}
	\end{align}
	and for $-\frac{d}{2} <\nu_2 <-\frac{1}{2}$ and $\rho_2 \leq -\frac{1}{2}-\nu_2$, 
	\begin{align}
	\||x|^{\nu_2} |\nabla|^{s+\nu_2} \scal{\nabla_\omega}^{\rho_2} e^{-it(-\Delta)^s} \psi\|_{L^2(\R, L^2)} \lesssim \|\psi\|_{L^2}. \label{weighted strichartz estimate 2}
	\end{align}
	Here $\scal{\nabla_\omega} = \sqrt{1-\Delta_\omega}$ with $\Delta_\omega$ is the Laplace-Beltrami operator on the unit sphere $\mathbb{S}^{d-1}$. Here we use the notation 
	\[
	\|f\|_{L^p_r L^q_\omega} = \Big( \int_0^\infty \Big( \int_{\mathbb{S}^{d-1}} |f(r\omega)|^q d\omega\Big)^{p/q} r^{d-1} dr \Big)^{1/p}.
	\]
	These weighted estimates are important to show the well-posedness below $L^2$ at least for the fractional Hartree equation (see \cite{ChoHajaiejHwangOzawa}).
\end{itemize}
\subsection{Nonlinear estimates} 
We recall the following fractional chain rule which is needed in the local well-posedness for $(\ref{NLFS})$. 
\begin{lem}[Fractional chain rule \cite{ChristWeinstein, KenigPonceVega}] \label{lem nonlinear estimate}
	Let $F \in C^1(\C, \C)$ and $\gamma \in (0,1)$. Then for $1<q \leq q_2 <\infty$ and $1<q_1 \leq \infty$ satisfying $\frac{1}{q}=\frac{1}{q_1}+\frac{1}{q_2}$,
	\[
	\||\nabla|^\gamma F(u)\|_{L^q} \lesssim \|F'(u)\|_{L^{q_1}} \||\nabla|^\gamma u\|_{L^{q_2}}.
	\]
\end{lem}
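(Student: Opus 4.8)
The plan is to reduce the statement to a pointwise inequality for $|\nabla|^\gamma F(u)$ and then close the estimate by H\"older's inequality together with the Hardy--Littlewood maximal theorem. Since $\gamma\in(0,1)$, I would start from the singular integral representation
\[
|\nabla|^\gamma F(u)(x) = c_{d,\gamma}\,\mathrm{p.v.}\int_{\R^d} \frac{F(u(x))-F(u(x-y))}{|y|^{d+\gamma}}\,dy,
\]
which is valid precisely in the range $0<\gamma<1$ (where no second difference is needed). The whole difficulty is thereby transferred to estimating this Gagliardo-type singular integral of the increment $F(u(x))-F(u(x-y))$.

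Next I would extract the derivative $F'$ by the fundamental theorem of calculus: setting $u_\tau(x,y)=u(x-y)+\tau\,(u(x)-u(x-y))$ and viewing $F$ as a $C^1$ map on $\C\cong\R^2$,
\[
F(u(x))-F(u(x-y)) = \Big(\int_0^1 F'(u_\tau(x,y))\,d\tau\Big)\,(u(x)-u(x-y)),
\]
whence
\[
|\nabla|^\gamma F(u)(x) \lesssim \int_{\R^d} \Big(\int_0^1 |F'(u_\tau(x,y))|\,d\tau\Big)\frac{|u(x)-u(x-y)|}{|y|^{d+\gamma}}\,dy.
\]
The factor $|u(x)-u(x-y)|/|y|^{\gamma}$ is exactly the increment appearing in the Gagliardo representation $\||\nabla|^\gamma u\|_{L^2}^2 = c\iint |u(x)-u(x-y)|^2|y|^{-d-2\gamma}\,dx\,dy$, so after a dyadic decomposition in $|y|$ I would pair it with a maximal/square-function version of $|\nabla|^\gamma u$ at the point $x$. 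The aim of this step is the pointwise bound
\[
|\nabla|^\gamma F(u)(x) \lesssim \mathcal{M}\big(F'(u)\big)(x)\;\mathcal{M}\big(|\nabla|^\gamma u\big)(x),
\]
where $\mathcal{M}$ denotes a Hardy--Littlewood type maximal operator.

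With such a pointwise inequality in hand, the conclusion is immediate: by H\"older's inequality,
\[
\||\nabla|^\gamma F(u)\|_{L^q} \lesssim \big\|\mathcal{M}(F'(u))\big\|_{L^{q_1}}\,\big\|\mathcal{M}(|\nabla|^\gamma u)\big\|_{L^{q_2}} \lesssim \|F'(u)\|_{L^{q_1}}\,\||\nabla|^\gamma u\|_{L^{q_2}},
\]
the last step using the boundedness of $\mathcal{M}$ on $L^{q_1}$ and $L^{q_2}$. This is where the hypotheses $1<q_2<\infty$ and $1<q_1\le\infty$ enter (the endpoint $q_1=\infty$ being handled directly by $\|\mathcal{M}(F'(u))\|_{L^\infty}\le\|F'(u)\|_{L^\infty}$), while the exponent $q$ is fixed by the scaling relation $\frac1q=\frac1{q_1}+\frac1{q_2}$.

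The main obstacle is the pointwise domination step, namely replacing the average $\int_0^1 |F'(u_\tau(x,y))|\,d\tau$ of $|F'|$ along the segment joining $u(x-y)$ and $u(x)$ by a maximal function of $F'(u)$ in the space variable. Since $F$ is only assumed to be $C^1$, one cannot bound $|F'|$ at the intermediate values $u_\tau$ by its values at the endpoints; the standard resolution, following Christ--Weinstein and Kenig--Ponce--Vega, is to split the $y$-integral into dyadic annuli and, on each annulus, absorb the intermediate values into a Hardy--Littlewood maximal function of $F'(u)$ while pairing the increment of $u$ with the corresponding piece of the $|\nabla|^\gamma u$ square function, finally summing the dyadic contributions by the Cauchy--Schwarz inequality. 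Carrying out this decomposition carefully, together with the Fefferman--Stein vector-valued maximal inequality, yields the pointwise bound above and hence the lemma.
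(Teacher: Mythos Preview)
The paper does not prove this lemma: it simply records the statement and refers to \cite[Proposition~3.1]{ChristWeinstein} for the case $1<q_1<\infty$ and to \cite{KenigPonceVega} for the endpoint $q_1=\infty$. Your proposal therefore goes well beyond the paper by sketching the argument from those references.

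Your outline has the right ingredients and correctly isolates the central obstacle --- the intermediate values $F'(u_\tau(x,y))$ are taken at points of the segment joining $u(x-y)$ and $u(x)$ in $\C$, not at values of $u$ at spatial points, so there is no direct way to dominate them by a spatial maximal function of $F'(u)$. Where the sketch overshoots is the stated target
\[
\bigl||\nabla|^\gamma F(u)(x)\bigr| \lesssim \mathcal{M}\bigl(F'(u)\bigr)(x)\,\mathcal{M}\bigl(|\nabla|^\gamma u\bigr)(x).
\]
The Christ--Weinstein proof does not deliver such a direct pointwise product bound, and your last paragraph does not explain how the dyadic decomposition would ``absorb the intermediate values into a Hardy--Littlewood maximal function of $F'(u)$'' --- that is precisely the step that fails for a general $C^1$ function $F$ if one insists on the product form. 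What the actual argument produces is a pointwise estimate at each Littlewood--Paley scale, which after summing in $\ell^2$ over frequencies yields a square-function bound; the $L^q$ estimate then follows from the Littlewood--Paley characterization of $L^q$ together with the Fefferman--Stein vector-valued maximal inequality you mention. So the machinery in your final paragraph is the correct one, but it leads to a square-function inequality rather than the simple pointwise product you set as the goal; the intermediate target should be reformulated accordingly.
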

We refer the reader to \cite[Proposition 3.1]{ChristWeinstein} for the proof of the above estimate when $1<q_1<\infty$ and to \cite{KenigPonceVega} for the proof when $q_1=\infty$. 
\subsection{Local well-posedness in $H^s$} \label{subsection local well posedness}
In this section, we recall the local well-posedness in the energy space $H^s$ for $(\ref{NLFS})$. As mentioned in the introduction, we will separate two cases: non-radial initial data and radially symmetric initial data. 
\paragraph{\bf Non-radial $H^s$ initial data.}
We have the following result due to \cite{HongSire} (see also \cite{Dinh-fract}).
\begin{prop}[Non-radial local theory \cite{HongSire, Dinh-fract}] \label{prop local well posedness non radial}
	Let $s \in(0,1) \backslash \{1/2\}$ and $\alpha>0$ be such that 
	\begin{align}
	s>\left\{
	\begin{array}{cl}
	\frac{1}{2} - \frac{2s}{\max (\alpha, 4)} &\text{if } d=1, \\
	\frac{d}{2} - \frac{2s} {\max (\alpha,2)} &\text{if } d\geq 2.
	\end{array}
	\right. \label{subcritical non radial}
	\end{align}
	Then for all $u_0 \in H^s$, there exist $T \in (0,+\infty]$ and a unique solution to $(\ref{NLFS})$ satisfying
	\[
	u \in C([0,T), H^s) \cap L^p_{\emph{loc}}([0,T), L^\infty),
	\]
	for some $p> \max(\alpha, 4)$ when $d=1$ and some $p> \max(\alpha, 2)$ when $d\geq 2$. Moreover, the following properties hold:
	\begin{itemize}
		\item If $T<+\infty$, then $\|u(t)\|_{H^s} \rightarrow \infty$ as $t\uparrow T$.
		\item There is conservation of mass, i.e. $M(u(t)) = M(u_0)$ for all $t \in [0,T)$.
		\item There is conservation of energy, i.e. $E(u(t)) =E(u_0)$ for all $t \in [0,T)$. 
	\end{itemize}	
\end{prop}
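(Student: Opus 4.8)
The plan is to recast $(\ref{NLFS})$ as a fixed-point problem for the Duhamel map in the spirit of Cazenave--Weissler and then read off the qualitative statements by continuation and approximation arguments. Writing $F(u)=\mu|u|^\alpha u$, a solution on $[0,T]$ is a fixed point of
\[
\Phi(u)(t)=e^{-it(-\Delta)^s}u_0-i\int_0^t e^{-i(t-\tau)(-\Delta)^s}F(u(\tau))\,d\tau.
\]
I would work in the complete metric space
\[
X_T=\Big\{u\in L^\infty([0,T],H^s)\ :\ \|u\|_{L^\infty_t H^s}+\|u\|_{L^p([0,T],L^\infty)}\le 2M\Big\},
\]
for a constant $M\sim\|u_0\|_{H^s}$ and an exponent $p>\max(\alpha,4)$ (resp. $p>\max(\alpha,2)$) when $d=1$ (resp. $d\ge2$), equipped with the weaker distance $d(u,v)=\|u-v\|_{L^\infty_t L^2}+\|u-v\|_{L^p_t L^\infty}$. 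Using this weaker distance avoids differentiating the nonlinearity in the contraction step, which matters because $F$ is only $C^1$ when $\alpha$ is not an even integer; for differences one only needs the pointwise bound $|F(u)-F(v)|\lesssim(|u|^\alpha+|v|^\alpha)|u-v|$.

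The core is to show that $\Phi$ maps $X_T$ into itself and is a contraction for $T$ small. The top-order $H^s$ part of $\Phi(u)$ is controlled at the non-lossy endpoint $(p,q)=(\infty,2)$, where $\gamma_{p,q}=0$, so that the linear flow obeys $\|e^{-it(-\Delta)^s}u_0\|_{L^\infty_t H^s}\lesssim\|u_0\|_{H^s}$ with no loss. The auxiliary norm is estimated by combining the homogeneous estimate $(\ref{strichartz estimate homogeneous})$ with a Sobolev embedding $W^{\sigma,q}\hookrightarrow L^\infty$, $\sigma>d/q$: to control $\|e^{-it(-\Delta)^s}u_0\|_{L^p_t L^\infty}$ by $\|u_0\|_{H^s}$ one needs an admissible pair $(p,q)$ with $\sigma+\gamma_{p,q}\le s$, which in the limit $\sigma\downarrow d/q$ reads $\tfrac{d}{2}-\tfrac{2s}{p}\le s$. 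The nonlinear term is handled by the inhomogeneous estimate $(\ref{strichartz estimate inhomogeneous})$ together with the fractional chain rule (Lemma $\ref{lem nonlinear estimate}$), which with $q_1=\infty$ gives $\||\nabla|^s F(u)\|_{L^{b'}_x}\lesssim\|u\|_{L^\infty_x}^\alpha\||\nabla|^s u\|_{L^{b'}_x}$, after which a H\"older estimate in time produces a factor $T^\theta$ with $\theta>0$ precisely when $p>\alpha$. Optimizing the admissible pair subject to these two requirements---the derivative budget $\tfrac{d}{2}-\tfrac{2s}{p}\le s$ and the smallest permissible $p$ (governed by the endpoint exclusions in the admissibility condition, giving $p>\max(\alpha,4)$ for $d=1$ and $p>\max(\alpha,2)$ for $d\ge2$ once $p>\alpha$ is imposed)---yields exactly the subcriticality condition $(\ref{subcritical non radial})$. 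The factor $T^\theta$ then gives self-mapping and contraction on a sufficiently short interval, and the Banach fixed-point theorem produces the unique fixed point in $X_T$.

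The remaining assertions follow by routine arguments. Uniqueness in the full class $C([0,T),H^s)\cap L^p_{\mathrm{loc}}([0,T),L^\infty)$ follows from the same difference estimate on small subintervals together with a continuity/bootstrap argument. That $\Phi(u)\in C([0,T],H^s)$, hence $u\in C_tH^s$, is immediate since the linear flow is strongly continuous on $H^s$ and the Duhamel integral is continuous in $t$ by dominated convergence; the inclusion $u\in L^p_{\mathrm{loc}}([0,T),L^\infty)$ is built into $X_T$. For the blow-up alternative, the key point is that the existence time furnished by the contraction depends only on $\|u_0\|_{H^s}$ (through $M$), because the problem is subcritical; consequently, if $T<+\infty$ but $\|u(t_n)\|_{H^s}$ stayed bounded along some $t_n\uparrow T$, one could restart the Cauchy problem at $t_n$ with a uniform existence time and extend the solution beyond $T$, contradicting maximality, so $\|u(t)\|_{H^s}\to\infty$ as $t\uparrow T$.

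Finally, the conservation laws are obtained by approximation. For smooth, rapidly decaying data the identities $\tfrac{d}{dt}M(u(t))=0$ and $\tfrac{d}{dt}E(u(t))=0$ follow by pairing $(\ref{NLFS})$ with $\bar u$ (resp. $\partial_t\bar u$), integrating, and taking real parts; one then approximates $u_0\in H^s$ by smooth data $u_0^{(n)}\to u_0$ in $H^s$, uses the continuous dependence of the flow on the data (a byproduct of the contraction), and passes to the limit, controlling the potential energy via the Gagliardo--Nirenberg inequality $(\ref{sharp gagliardo nirenberg inequality masscritical NLFS})$. I expect the genuine obstacle to lie not in any of these steps but in the derivative loss inherent to the Strichartz estimates $(\ref{strichartz estimate homogeneous})$--$(\ref{strichartz estimate inhomogeneous})$: unlike the classical Schr\"odinger case, every admissible pair except $(\infty,2)$ costs $\gamma_{p,q}>0$ derivatives, so the auxiliary space and the distribution of derivatives in the chain rule must be arranged so that this loss, together with the Sobolev cost of reaching $L^\infty$, is paid for by the available $H^s$ regularity. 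Keeping track of this budget is precisely what forces the restricted range $(\ref{subcritical non radial})$ and is the crux of the whole argument.
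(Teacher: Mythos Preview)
Your proposal is correct and follows exactly the approach the paper indicates: the paper does not give a self-contained proof of this proposition but simply states that it ``is based on Strichartz estimates and the contraction mapping argument'' with ``the loss of derivatives in Strichartz estimates \ldots\ compensated for by using the Sobolev embedding,'' and refers to \cite{HongSire, Dinh-fract} for details. Your sketch fleshes out precisely this strategy---fixed point in $L^\infty_t H^s\cap L^p_t L^\infty$, Sobolev embedding $W^{\sigma,q}\hookrightarrow L^\infty$ to absorb the loss $\gamma_{p,q}$, fractional chain rule with $q_1=\infty$, and the derivative-budget bookkeeping that produces $(\ref{subcritical non radial})$---so there is nothing to correct or contrast.
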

The proof of this result is based on Strichartz estimates and the contraction mapping argument. The loss of derivatives in Strichartz estimates can be compensated for by using the Sobolev embedding. We refer the reader to \cite{HongSire} or \cite{Dinh-fract} for more details. 
\begin{rem}
	It follows from $(\ref{subcritical non radial})$ and $s \in (0,1) \backslash \{1/2\}$ that the local well-posedness for non radial data in $H^s$ is available only for 
	\begin{align}
	\left\{
	\begin{array}{lll}
	s \in (1/3, 1/2),& 0<\alpha<\frac{4s}{1-2s} &\text{if } d=1, \\
	s \in (1/2, 1), & 0<\alpha<\infty &\text{if } d=1, \\
	s \in (d/4, 1), & 0<\alpha<\frac{4s}{d-2s} &\text{if } d=2, 3. 
	\end{array}
	\right.
	\label{local well posedness condition}
	\end{align}
	In particular, in the mass-critical case $\alpha=\frac{4s}{d}$, the $(\ref{NLFS})$ is locally well-posed in $H^s$ with
	\[
	\left\{
	\begin{array}{cl}
	s \in (1/3, 1/2) \cap (1/2, 1) &\text{if } d=1, \\
	s \in (d/4, 1) &\text{if } d=2, 3. \\
	\end{array}
	\right.
	\]
\end{rem}
\begin{prop}[Non-radial global existence \cite{Dinh-fract}] \label{prop global existence non radial}
	Let $s, \alpha$ and $d$ be as in $(\ref{local well posedness condition})$. Then for any $u_0 \in H^s$, the solution to $(\ref{NLFS})$ given in Proposition $\ref{prop local well posedness non radial}$ can be extended to the whole $\R$ if one of the following conditions is satisfied:
	\begin{itemize}
		\item $\mu=1$,
		\item $\mu=-1$ and $0<\alpha<\frac{4s}{d}$,
		\item $\mu=-1$, $\alpha=\frac{4s}{d}$ and $\|u_0\|_{L^2}$ is small,
		\item $\mu=-1$ and $\|u_0\|_{H^s}$ is small. 
	\end{itemize}
\end{prop}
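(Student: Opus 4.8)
The plan is to show that, under any of the four hypotheses, the quantity $\|u(t)\|_{H^s}$ stays bounded on the maximal forward interval $[0,T)$; by the blow-up alternative in Proposition \ref{prop local well posedness non radial} this forces $T=+\infty$, and the time-reversal symmetry $u(t,x)\mapsto\overline{u(-t,x)}$ of $(\ref{NLFS})$ then produces a solution on all of $\R$. Since mass is conserved, $\|u(t)\|_{L^2}=\|u_0\|_{L^2}$ for all $t\in[0,T)$, so the task reduces to bounding the homogeneous norm $\|u(t)\|_{\dot{H}^s}$ by means of the conserved energy $E(u(t))=E(u_0)$ together with the Gagliardo--Nirenberg inequality.

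Three of the cases are direct. If $\mu=1$, both terms in $E$ are nonnegative, hence $\tfrac12\|u(t)\|_{\dot{H}^s}^2\le E(u_0)$. If $\mu=-1$ and $0<\alpha<\tfrac{4s}{d}$, I would insert the fractional Gagliardo--Nirenberg bound $\|f\|_{L^{\alpha+2}}^{\alpha+2}\lesssim\|f\|_{\dot{H}^s}^{\frac{d\alpha}{2s}}\|f\|_{L^2}^{\alpha+2-\frac{d\alpha}{2s}}$ into the energy identity; since $\tfrac{d\alpha}{2s}<2$, Young's inequality absorbs the potential term into $\tfrac14\|u(t)\|_{\dot{H}^s}^2$ plus a constant depending only on $E(u_0)$ and $\|u_0\|_{L^2}$. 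If $\mu=-1$ and $\alpha=\tfrac{4s}{d}$, the sharp inequality $(\ref{sharp gagliardo nirenberg inequality masscritical NLFS})$ together with the identity $\tfrac{2}{\alpha+2}C_{\text{GN}}=\|Q\|_{L^2}^{-4s/d}$ gives
\[
E(u_0)=E(u(t))\ \ge\ \tfrac12\|u(t)\|_{\dot{H}^s}^2\Big(1-\big(\|u_0\|_{L^2}/\|Q\|_{L^2}\big)^{4s/d}\Big),
\]
so that if $\|u_0\|_{L^2}<\|Q\|_{L^2}$ (the meaning of ``small'' here) the bracket is a fixed positive number and $\|u(t)\|_{\dot{H}^s}^2$ is bounded by $2E(u_0)$ divided by it.

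The remaining case, $\mu=-1$ with $\|u_0\|_{H^s}$ small, is the delicate one. For mass-subcritical $\alpha$ it is already settled by the argument above, so the point is to handle the mass-critical and mass-supercritical exponents in $(\ref{local well posedness condition})$, where the potential term carries a power $p:=\tfrac{d\alpha}{4s}\ge1$ of $g(t):=\|u(t)\|_{\dot{H}^s}^2$. Here I would run a continuity argument: $g$ is continuous on $[0,T)$, and by Sobolev embedding $|E(u_0)|\lesssim\|u_0\|_{H^s}^2+\|u_0\|_{H^s}^{\alpha+2}$ and $g(0)=\|u_0\|_{\dot{H}^s}^2$ are both small. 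The energy identity and Gagliardo--Nirenberg give $\tfrac12 g(t)\le E(u_0)+C\|u_0\|_{L^2}^{q}\,g(t)^{p}$ with $q=\alpha+2-\tfrac{d\alpha}{2s}>0$ throughout the range $(\ref{local well posedness condition})$; the map $r\mapsto\tfrac12 r-C\|u_0\|_{L^2}^{q}r^{p}$ then increases from $0$ to a positive maximum before turning down, and if $\|u_0\|_{H^s}$ is small enough that $g(0)$ and $E(u_0)$ lie safely below that maximum, continuity of $g$ prevents it from ever crossing the barrier, so $g$ stays bounded on $[0,T)$. This continuity argument is the only genuinely delicate point: it must be set up so that a single smallness condition on $\|u_0\|_{H^s}$ simultaneously controls $E(u_0)$, the initial value $g(0)$, and the height of the nonlinear barrier $\tfrac12 r-C\|u_0\|_{L^2}^{q}r^{p}$; the rest is a routine combination of the conservation laws, Gagliardo--Nirenberg, and the blow-up alternative of Proposition \ref{prop local well posedness non radial}.
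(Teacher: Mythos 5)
Your proposal is correct and follows essentially the same route as the paper: conservation of mass and energy combined with the (sharp) Gagliardo--Nirenberg inequality to bound $\|u(t)\|_{\dot H^s}$, then the blow-up alternative of Proposition \ref{prop local well posedness non radial}. You merely make explicit two points the paper leaves terse --- the quantification of ``small'' via the sharp constant $C_{\text{GN}}$ in the mass-critical case, and the continuity/barrier argument behind the assertion that small $\|u_0\|_{H^s}$ forces $\|u(t)\|_{H^s}$ to stay bounded --- both of which are faithful elaborations rather than a different proof.
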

\begin{proof}
	The case $\mu=1$ follows easily from the blow-up alternative together with the conservation of mass and energy. The case $\mu=-1$ and $0<\alpha<\frac{4s}{d}$ follows from the Gagliardo-Nirenberg inequality (see e.g. \cite[Appendix]{Tao}). Indeed, by Gagliardo-Nirenberg inequality and the mass conservation, 
	\[
	\|u(t)\|^{\alpha+2}_{L^{\alpha+2}} \lesssim \|u(t)\|_{\dot{H}^s}^{\frac{d\alpha}{2s}} \|u(t)\|_{L^2}^{\alpha+2 - \frac{d\alpha}{2s}} = \|u(t)\|_{\dot{H}^s}^{\frac{d\alpha}{2s}} \|u_0\|_{L^2}^{\alpha+2 - \frac{d\alpha}{2s}}.
	\]
	The conservation of energy then implies
	\[
	\frac{1}{2} \|u(t)\|^2_{\dot{H}^s} = E(u(t)) +\frac{1}{\alpha+2} \|u(t)\|^{\alpha+2}_{L^{\alpha+2}}  \lesssim E(u_0) + \frac{1}{\alpha+2} \|u(t)\|_{\dot{H}^s}^{\frac{d\alpha}{2s}} \|u_0\|_{L^2}^{\alpha+2-\frac{d\alpha}{2s}}. 
	\]
	If $0<\alpha<\frac{4s}{d}$, then $\frac{d\alpha}{2s} \in (0,2)$ and hence $\|u(t)\|_{\dot{H}^s} \lesssim 1$. This combined with the conservation of mass yield the boundedness of $\|u(t)\|_{H^s}$ for any $t$ belongs to the existence time. The blow-up alternative gives the global existence. The case $\mu=-1, \alpha= \frac{4s}{d}$ and $\|u_0\|_{L^2}$ small is treated similarly. It remains to treat the case $\mu=-1$ and $\|u_0\|_{\dot{H}^s}$ is small. Thanks to the Sobolev embedding with $\frac{1}{2} \leq \frac{1}{\alpha+2} + \frac{s}{d}$, we bound
	\[
	\|f\|_{L^{\alpha+2}} \lesssim \|f\|_{H^s}. 
	\]
	This shows in particular that $E(u_0)$ is small if $\|u_0\|_{H^s}$ is small. Therefore,
	\[
	\frac{1}{2} \|u(t)\|^2_{\dot{H}^s} = E(u(t)) +\frac{1}{\alpha+2} \|u(t)\|_{L^{\alpha+2}}^{\alpha+2} \leq E(u_0) + C \|u(t)\|^{\alpha+2}_{H^s}.
	\]
	Since $\|u_0\|_{H^s}$ is small, the above estimate implies that $\|u(t)\|_{H^s}$ is bounded from above and the proof is complete. 
\end{proof}
\paragraph{\bf Radial $H^s$ initial data.} Thanks to Strichartz estimates without loss of derivatives in the radial case, we have the following result.
\begin{prop}[Radial local theory] \label{prop local well posedness radial}
	Let $d\geq 2$ and $s \in \left[\frac{d}{2d-1}, 1\right)$ and $0<\alpha<\frac{4s}{d-2s}$. Let 
	\begin{align}
	p=\frac{4s(\alpha+2)}{\alpha(d-2s)}, \quad q = \frac{d(\alpha+2)}{d+\alpha s}. \label{choice pq radial}
	\end{align}
	Then for any $u_0 \in H^s$ radial, there exist $T\in (0, +\infty]$ and a unique solution to $(\ref{NLFS})$ satisfying 
	\[
	u \in C([0,T), H^s) \cap L^p_{\emph{loc}}([0,T), W^{s, q}).
	\]
	Moreover, the following properties hold:
	\begin{itemize}
		\item If $T<+\infty$, then $\|u(t)\|_{\dot{H}^s} \rightarrow \infty$ as $t\uparrow T$.
		\item $u \in L^a_{\emph{loc}} ([0,T), W^{s,b})$ for any fractional admissible pair $(a,b)$.
		\item There is conservation of mass, i.e. $M(u(t)) = M(u_0)$ for all $t \in [0,T)$.
		\item There is conservation of energy, i.e. $E(u(t)) = E(u_0)$ for all $t\in [0,T)$. 
	\end{itemize}
\end{prop}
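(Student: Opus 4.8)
The plan is to solve the Duhamel formulation of $(\ref{NLFS})$,
\[
u(t)=e^{-it(-\Delta)^s}u_0-i\mu\int_0^t e^{-i(t-\tau)(-\Delta)^s}\bigl(|u|^\alpha u\bigr)(\tau)\,d\tau=:\Phi(u)(t),
\]
by a fixed-point argument carried out in the subspace of radial functions, so that the Strichartz estimates without loss of derivatives $(\ref{radial strichartz estimate homogeneous})$--$(\ref{radial strichartz estimate inhomogeneous})$ apply (radiality is preserved by $e^{-it(-\Delta)^s}$ and by $z\mapsto|z|^\alpha z$). First I would check that $(p,q)$ from $(\ref{choice pq radial})$ is fractional admissible in the sense of $(\ref{fractional admissible})$: the identity $\frac{2s}{p}+\frac{d}{q}=\frac{d}{2}$ is a direct computation, $q\in(2,\infty)$ and $p\in(2,\infty)$ follow from $2s<d$ and from $\alpha<\frac{4s}{d-2s}$ respectively, so the excluded endpoint is automatically avoided. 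The key arithmetic fact is that $\alpha<\frac{4s}{d-2s}$ is equivalent to $p>\alpha+2$; this strict inequality is what will produce a positive power of $T$ in the nonlinear estimate and will make the local existence time depend only on $\|u_0\|_{H^s}$.

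The core estimate is
\[
\bigl\|\,|u|^\alpha u\,\bigr\|_{L^{p'}([0,T],W^{s,q'})}\lesssim T^{\theta}\,\|u\|_{L^{p}([0,T],W^{s,q})}^{\alpha+1},\qquad \theta=\tfrac{1}{(\alpha+1)p'}-\tfrac1p>0.
\]
For the spatial part one combines the fractional chain rule (Lemma $\ref{lem nonlinear estimate}$ with $F(z)=|z|^\alpha z$, $|F'(z)|\lesssim|z|^\alpha$) with the Sobolev embeddings $\dot W^{s,q}\hookrightarrow L^{q^*}$, $q^*=\tfrac{dq}{d-sq}$, and $W^{s,q}\hookrightarrow L^{(\alpha+1)q'}$ (valid since $sq<d$), obtaining $\bigl\|\,|u|^\alpha u\,\bigr\|_{W^{s,q'}}\lesssim\|u\|_{W^{s,q}}^{\alpha+1}$; then Hölder in time, using $(\alpha+1)p'\leq p$ (again equivalent to $p\geq\alpha+2$), supplies the factor $T^{\theta}$. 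Inserting this into $(\ref{radial strichartz estimate homogeneous})$--$(\ref{radial strichartz estimate inhomogeneous})$, applied to $\langle\nabla\rangle^s$ of the relevant quantities with the admissible pairs $(\infty,2)$ and $(p,q)$ on the left, yields
\[
\|\Phi(u)\|_{L^\infty([0,T],H^s)\cap L^p([0,T],W^{s,q})}\leq C\|u_0\|_{H^s}+CT^{\theta}\|u\|_{L^p([0,T],W^{s,q})}^{\alpha+1}.
\]
Writing $\|w\|_{Y_T}:=\|w\|_{L^\infty([0,T],L^2)}+\|w\|_{L^p([0,T],L^q)}$, one gets in the same way $\|\Phi(u)-\Phi(v)\|_{Y_T}\lesssim T^{\theta}\bigl(\|u\|_{L^pW^{s,q}}^\alpha+\|v\|_{L^pW^{s,q}}^\alpha\bigr)\|u-v\|_{Y_T}$, whose proof needs only $\bigl|\,|z|^\alpha z-|w|^\alpha w\,\bigr|\lesssim(|z|^\alpha+|w|^\alpha)|z-w|$ and Hölder, so that no fractional Leibniz rule acts on a difference. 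Choosing $R=2C\|u_0\|_{H^s}$ and then $T=T(\|u_0\|_{H^s})$ small, $\Phi$ is a contraction on the ball $\{\|u\|_{L^\infty H^s}+\|u\|_{L^pW^{s,q}}\leq R\}$, which is complete for the metric $\|\cdot\|_{Y_T}$ by weak-$*$ lower semicontinuity of the $L^\infty H^s$-norm; this produces the desired $u\in C([0,T],H^s)\cap L^p_{\mathrm{loc}}([0,T),W^{s,q})$, unique in this class by the difference estimate on short subintervals, with $t\mapsto u(t)\in H^s$ continuous via the Duhamel formula. Plugging $u$ back into Duhamel and applying Strichartz with any fractional admissible $(a,b)$ on the left gives $u\in L^a_{\mathrm{loc}}([0,T),W^{s,b})$.

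Next I would pass to the maximal interval. Since the local time depends only on $\|u_0\|_{H^s}$, the standard continuation argument gives a maximal $T\in(0,+\infty]$, and if $T<+\infty$ then $\|u(t)\|_{H^s}$ cannot stay bounded as $t\uparrow T$ (otherwise one restarts the solution near $T$ and extends past it); together with mass conservation this forces $\|u(t)\|_{\dot H^s}\to\infty$. Conservation of mass and energy then follow in the usual way: approximate $u_0$ by smooth fast-decaying data, for which the identities hold by differentiating the equation, and pass to the limit using the continuous dependence furnished by the same Strichartz bounds.

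The step I expect to be delicate is the nonlinear estimate: one must verify that all intermediate exponents ($q_1=\frac{q}{q-2}$ in the chain rule, $q^*$ and $(\alpha+1)q'$ in the embeddings, $(\alpha+1)p'$ in the time Hölder) fall in the ranges required by Lemma $\ref{lem nonlinear estimate}$ ($1<q\leq q_2<\infty$, $1<q_1\leq\infty$) and by the Sobolev embeddings, and it is exactly here that the hypotheses $d\geq 2$, $s\in[\frac{d}{2d-1},1)$, $0<\alpha<\frac{4s}{d-2s}$ are consumed. A secondary subtlety is that $F(z)=|z|^\alpha z$ is only $C^1$ for $\alpha\geq1$, so when $0<\alpha<1$ the chain rule of Lemma $\ref{lem nonlinear estimate}$ must be replaced by its variant for Hölder-continuous nonlinearities, which remains valid since $s<1$ (cf. \cite{ChristWeinstein, KenigPonceVega, HongSire}); modulo these points the argument is routine.
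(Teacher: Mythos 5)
Your proposal is correct and follows essentially the same route as the paper: Duhamel's formula, the radial Strichartz estimates without loss of derivatives for the fractional admissible pair $(p,q)$, the fractional chain rule combined with Sobolev embedding and H\"older in time to gain a positive power $|I|^{\theta}$ with $\theta=1-\frac{\alpha(d-2s)}{4s}>0$ (your exponent $\frac{1}{(\alpha+1)p'}-\frac{1}{p}$ is this quantity divided by $\alpha+1$, an immaterial bookkeeping slip), and a contraction in the Kato-type weaker metric $L^\infty L^2\cap L^pL^q$, with the blow-up alternative coming from the fact that the local time depends only on the $\dot H^s$-norm of the data. The only additions beyond the paper's argument are your explicit remarks on the $C^1$ issue for $\alpha<1$ and on the approximation argument for the conservation laws, which the paper leaves implicit.
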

\begin{proof}
	It is easy to check that $(p,q)$ satisfies the fractional admissible condition $(\ref{fractional admissible})$. We choose $(m,n)$ so that
	\begin{align}
	\frac{1}{p'} = \frac{1}{p} + \frac{\alpha}{m}, \quad \frac{1}{q'}= \frac{1}{q} + \frac{\alpha}{n}. \label{choice mn}
	\end{align}
	We see that
	\begin{align}
	\frac{\alpha}{m}-\frac{\alpha}{p} = 1-\frac{\alpha (d-2s)}{4s} =:\theta>0, \quad q \leq n = \frac{dq}{d-sq}. \label{define theta}
	\end{align}
	The later fact gives the Sobolev embedding $\dot{W}^{s,q} \hookrightarrow L^n$. Let us now consider
	\[
	X:= \left\{ C(I, H^s) \cap L^p(I, W^{s,q}) \ : \ \|u\|_{L^\infty(I, \dot{H}^s)} + \|u\|_{L^p(I, \dot{W}^{s,q})} \leq M \right\},
	\]
	equipped with the distance
	\[
	d(u,v):= \|u-v\|_{L^\infty(I, L^2)} +\|u-v\|_{L^p(I, L^q)},
	\]
	where $I=[0,\zeta]$ and $M, \zeta>0$ to be chosen later. By Duhamel's formula, it suffices to prove that the functional 
	\[
	\Phi(u)(t) := e^{-it(-\Delta)^s} u_0 -i \mu \int_0^t e^{-i(t-\tau)(-\Delta)^s} |u(\tau)|^\alpha u(\tau) d\tau
	\]
	is a contraction on $(X,d)$. By radial Strichartz estimates $(\ref{radial strichartz estimate homogeneous})$ and $(\ref{radial strichartz estimate inhomogeneous})$,
	\begin{align*}
	\|\Phi(u)\|_{L^\infty(I, \dot{H}^s)} + \|\Phi(u)\|_{L^p(I, \dot{W}^{s, q})} &\lesssim \|u_0\|_{\dot{H}^s} + \||u|^\alpha u\|_{L^{p'}(I, \dot{W}^{s,q'})}, \\
	\|\Phi(u) - \Phi(v)\|_{L^\infty(I, L^2)} + \|\Phi(u) - \Phi(v)\|_{L^p(I, L^q)} &\lesssim \||u|^\alpha u - |v|^\alpha v\|_{L^{p'} (I, L^{q'})}.
	\end{align*}
	The fractional chain rule given in Lemma $\ref{lem nonlinear estimate}$ and the H\"older inequality give
	\begin{align*}
	\||u|^\alpha u\|_{L^{p'}(I, \dot{W}^{s,q'})} &\lesssim \|u\|^\alpha_{L^m(I, L^n)} \|u\|_{L^p(I, \dot{W}^{s,q})}, \\
	&\lesssim |I|^\theta \|u\|^\alpha_{L^p(I, L^n)} \|u\|_{L^p(I, \dot{W}^{s,q})} \\
	&\lesssim |I|^\theta \|u\|^{\alpha+1}_{L^p(I, \dot{W}^{s,q})}.
	\end{align*}
	Similarly,
	\begin{align*}
	\||u|^\alpha- |v|^\alpha v\|_{L^{p'}(I, L^{q'})} &\lesssim \left(\|u\|^\alpha_{L^m(I,L^n)} + \|v\|^\alpha_{L^m(I,L^n)} \right) \|u-v\|_{L^p(I,L^q)} \\
	&\lesssim |I|^\theta \left(\|u\|^\alpha_{L^p(I,\dot{W}^{s,q})} + \|v\|^\alpha_{L^p(I,\dot{W}^{s,q})} \right) \|u-v\|_{L^p(I,L^q)}.
	\end{align*}
	This shows that for all $u, v\in X$, there exists $C>0$ independent of $T$ and $u_0 \in H^s$ such that
	\begin{align*}
	\|\Phi(u)\|_{L^\infty(I, \dot{H}^s)} + \|\Phi(u)\|_{L^p(I, \dot{W}^{s,q})} &\leq C\|u_0\|_{\dot{H}^s} + C \zeta^\theta M^{\alpha+1}, \\
	d(\Phi(u),\Phi(v)) &\leq C\zeta^\theta M^\alpha d(u,v).
	\end{align*}
	If we set $M=2C\|u_0\|_{\dot{H}^s}$ and choose $\zeta>0$ so that
	\[
	C\zeta^\theta M^\alpha \leq \frac{1}{2},
	\]
	then $\Phi$ is a strict contraction on $(X,d)$. This proves the existence of solution $u \in C(I, H^s) \cap L^p(I, W^{s,q})$. By radial Strichartz estimates, we see that $u \in L^a(I, W^{s,b})$ for any fractional admissible pairs $(a,b)$. The blow-up alternative follows easily since the existence time depends only on the $\dot{H}^s$-norm of initial data. The proof is complete.
\end{proof}
As in Proposition $\ref{prop global existence non radial}$, we have the following criteria for global existence of radial solutions in $H^s$. 
\begin{prop}[Radial global existence] \label{prop global existence radial}
	Let $d\geq 2, s \in \left[\frac{d}{2d-1}, 1\right)$ and $0<\alpha<\frac{4s}{d-2s}$. Then for any $u_0\in H^s$ radial, the solution to $(\ref{NLFS})$ given in Proposition $\ref{prop local well posedness radial}$ can be extended to the whole $\R$ if one of the following conditions is satisfied:
	\begin{itemize}
		\item $\mu=1$,
		\item $\mu=-1$ and $0<\alpha<\frac{4s}{d}$,
		\item $\mu=-1$, $\alpha=\frac{4s}{d}$ and $\|u_0\|_{L^2}$ is small, 
		\item $\mu=-1$ and $\|u_0\|_{H^s}$ is small.
	\end{itemize}
\end{prop}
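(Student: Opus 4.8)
The plan is to run the same a priori bound argument that established Proposition \ref{prop global existence non radial}, the only change being that I now invoke the radial local theory of Proposition \ref{prop local well posedness radial} in place of the non-radial one. The key structural input is the blow-up alternative furnished there: if the maximal existence time $T$ is finite, then $\|u(t)\|_{\dot H^s}\to\infty$ as $t\uparrow T$. Hence, to obtain global existence in each of the four cases it suffices to produce an a priori bound on $\|u(t)\|_{\dot H^s}$ that holds uniformly on the existence interval; note that conservation of mass already pins $\|u(t)\|_{L^2}=\|u_0\|_{L^2}$, so controlling the homogeneous seminorm controls the full $H^s$ norm.

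In the defocusing case $\mu=1$ the energy is a sum of nonnegative terms, so conservation of energy yields $\tfrac12\|u(t)\|_{\dot H^s}^2\le E(u(t))=E(u_0)$ immediately. For $\mu=-1$ with $0<\alpha<\tfrac{4s}{d}$ I would use the Gagliardo--Nirenberg inequality to estimate $\|u(t)\|_{L^{\alpha+2}}^{\alpha+2}\lesssim \|u(t)\|_{\dot H^s}^{d\alpha/(2s)}\|u_0\|_{L^2}^{\alpha+2-d\alpha/(2s)}$ and insert this into the conserved energy; since $\tfrac{d\alpha}{2s}<2$ the nonlinear contribution is strictly subquadratic in $\|u(t)\|_{\dot H^s}$, which forces $\|u(t)\|_{\dot H^s}\lesssim 1$. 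The mass-critical case $\alpha=\tfrac{4s}{d}$ with small $\|u_0\|_{L^2}$ is handled by the same computation, now with exponent $\tfrac{d\alpha}{2s}=2$: here the sharp Gagliardo--Nirenberg inequality \eqref{sharp gagliardo nirenberg inequality masscritical NLFS} together with the explicit constant $C_{\text{GN}}$ gives $\tfrac{1}{\alpha+2}\|u(t)\|_{L^{\alpha+2}}^{\alpha+2}\le \tfrac12(\|u_0\|_{L^2}/\|Q\|_{L^2})^{4s/d}\|u(t)\|_{\dot H^s}^2$, where the algebraic identity $\tfrac{2s+d}{4s+2d}=\tfrac12$ is what produces the clean prefactor. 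Thus when $\|u_0\|_{L^2}<\|Q\|_{L^2}$ (which is the precise meaning of smallness here) the nonlinear term can be absorbed into the left-hand side and $\bigl(1-(\|u_0\|_{L^2}/\|Q\|_{L^2})^{4s/d}\bigr)\tfrac12\|u(t)\|_{\dot H^s}^2\le E(u_0)$ bounds the seminorm.

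The last case, $\mu=-1$ with $\|u_0\|_{H^s}$ small, is the only one requiring a continuity argument rather than a direct bound. Since $0<\alpha<\tfrac{4s}{d-2s}$ the Sobolev embedding $H^s\hookrightarrow L^{\alpha+2}$ holds (the exponent condition $\tfrac12\le \tfrac{1}{\alpha+2}+\tfrac sd$ is exactly $\alpha\le\tfrac{4s}{d-2s}$), giving $\|u(t)\|_{L^{\alpha+2}}\lesssim\|u(t)\|_{H^s}$, and in particular $E(u_0)$ is small when $\|u_0\|_{H^s}$ is small. Conservation of energy then yields $\tfrac12\|u(t)\|_{\dot H^s}^2\le E(u_0)+C\|u(t)\|_{H^s}^{\alpha+2}$, and because $\alpha>0$ the right-hand side is superlinear in $\|u(t)\|_{H^s}^2$. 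I would close this by a standard bootstrap: $t\mapsto\|u(t)\|_{H^s}$ is continuous on $[0,T)$, starts small, and the pointwise inequality prevents it from crossing the barrier separating the two roots of the associated scalar inequality, so it stays uniformly small on the whole existence interval.

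The only genuinely delicate point is this final bootstrap, where one must quantify the data smallness so that the small-norm region is trapping and check that the continuity of $t\mapsto\|u(t)\|_{\dot H^s}$ (hence of $\|u(t)\|_{H^s}$, by mass conservation) rules out escape; everything else is a transcription of the non-radial argument with $\dot H^s$ in place of $H^s$ in the blow-up alternative.
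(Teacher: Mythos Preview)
Your proposal is correct and follows essentially the same approach as the paper, which simply refers back to the proof of Proposition~\ref{prop global existence non radial} without further elaboration. If anything, you supply more detail than the paper does: you make explicit the sharp constant computation in the mass-critical case (the paper only says this case is ``treated similarly''), and you correctly identify the continuity/bootstrap argument needed in the small-$H^s$ case, which the paper leaves implicit in the sentence ``the above estimate implies that $\|u(t)\|_{H^s}$ is bounded from above.''
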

Combining the local well-posedness for non-radial and radial initial data, we obtain the following summary.
\begin{table}[!ht]
	\begin{center}
		\renewcommand{\arraystretch}{1.3}
		\begin{tabular}{|c|c|c|c|}
			\cline{2-4}
			\multicolumn{1}{c|}{} & $s$ & $\alpha$ & LWP \\ \hline
			$d=1$ & $\frac{1}{3}<s<\frac{1}{2}$  & $0<\alpha<\frac{4s}{1-2s}$   & $u_0$ non-radial \\ \hline
			$d=1$ & $\frac{1}{2}<s<1$   & $0<\alpha<\infty$   & $u_0$  non-radial \\ \hline
			$d=2$ & $\frac{1}{2}<s<1$   & $0<\alpha<\frac{4s}{2-2s}$   & $u_0$ non-radial \\ \hline
			$d=3$ & $\frac{3}{5}\leq s\leq \frac{3}{4}$   & $0<\alpha<\frac{4s}{3-2s}$   & $u_0$ radial \\ \hline
			$d=3$ & $\frac{3}{4}<s< 1$   & $0<\alpha<\frac{4s}{3-2s}$   & $u_0$ non-radial  \\ \hline
			$d\geq 4$ & $\frac{d}{2d-1}\leq s<1$   & $0<\alpha<\frac{4s}{d-2s}$   & $u_0$ radial  \\ \hline
		\end{tabular}
	\end{center}
	\caption{Local well-posedness (LWP) in $H^s$ for NLFS}
	\label{table}
\end{table}
\begin{coro}[Blow-up rate] \label{coro blowup rate NLFS}
	Let $d, \alpha$ and $u_0 \in H^s$ be as in Table $\ref{table}$. Assume that the corresponding solution $u$ to $(\ref{NLFS})$ given in Proposition $\ref{prop local well posedness non radial}$ and Proposition $\ref{prop local well posedness radial}$ blows up at finite time $0<T<+\infty$. Then there exists $C>0$ such that
	\begin{align}
	\|u(t)\|_{\dot{H}^s} > \frac{C}{(T-t)^{\frac{s-\sce}{2s}}}, \label{blowup rate NLFS}
	\end{align}
	for all $0<t<T$.
\end{coro}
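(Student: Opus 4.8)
The plan is to combine the quantitative lower bound on the local existence time furnished by the local well-posedness theory of Section $\ref{subsection local well posedness}$ with the blow-up alternative $\|u(t)\|_{\dot H^s}\to\infty$ as $t\uparrow T$. Fix $t_0\in(0,T)$ and run the local theory with $u(t_0)$ as initial datum at time $t_0$: it produces a solution on an interval $[t_0,t_0+\tau_0]$ which, by uniqueness, coincides with $u$ wherever both are defined. Hence, if $\tau_0>T-t_0$, the solution $u$ would extend in $H^s$ up to and beyond $T$, contradicting finite-time blow-up; therefore $\tau_0\le T-t_0$. The whole matter thus reduces to a scaling-correct lower bound
\[
\tau_0\ \gtrsim\ \|u(t_0)\|_{\dot H^s}^{-2s/(s-\sce)},
\]
the implicit constant allowed to depend on $\|u_0\|_{L^2}$, $d$ and $s$; indeed, combined with $\tau_0\le T-t_0$ this yields $(\ref{blowup rate NLFS})$ for $t_0$ close to $T$, and then for all $t_0\in(0,T)$ after shrinking the constant, since $\|u(t)\|_{\dot H^s}$ is continuous and never vanishes (otherwise $u\equiv0$ by uniqueness), hence bounded below on compact subintervals, while $(T-t)^{-(s-\sce)/(2s)}$ is bounded there.

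For the radial entries of Table $\ref{table}$ the bound can be read off the contraction scheme in the proof of Proposition $\ref{prop local well posedness radial}$, which depends on the datum only through its $\dot H^s$-norm. There the existence time $\zeta$ for datum $\phi$ is admissible as long as $C\zeta^\theta M^\alpha\le\frac12$ with $M\sim\|\phi\|_{\dot H^s}$ and $\theta=1-\frac{\alpha(d-2s)}{4s}>0$, so one may take $\zeta\sim\|\phi\|_{\dot H^s}^{-\alpha/\theta}$. It then remains to check $\alpha/\theta=2s/(s-\sce)$: writing $\alpha=\frac{4s}{d-2\sce}$ (equivalent to $\sce=\frac d2-\frac{2s}{\alpha}$, and note $\sce<s$ throughout Table $\ref{table}$ since $\alpha<\frac{4s}{d-2s}$) gives $\theta=\frac{2(s-\sce)}{d-2\sce}$, whence $\alpha/\theta=\frac{4s}{2(s-\sce)}=\frac{2s}{s-\sce}$. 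Taking $\phi=u(t_0)$ then produces the desired lower bound with a universal constant and finishes these cases.

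For the non-radial entries I would instead exploit the scaling symmetry of $(\ref{NLFS})$ to normalize the datum. Given $t_0$, set $v(\tau,y):=\lambda^{2s/\alpha}u(t_0+\lambda^{2s}\tau,\lambda y)$, again a solution of $(\ref{NLFS})$, with $v(0)=\lambda^{2s/\alpha}u(t_0,\lambda\cdot)$ obeying $\|v(0)\|_{\dot H^s}=\lambda^{s-\sce}\|u(t_0)\|_{\dot H^s}$ and, by conservation of mass, $\|v(0)\|_{L^2}=\lambda^{-\sce}\|u_0\|_{L^2}$. Choosing $\lambda=\|u(t_0)\|_{\dot H^s}^{-1/(s-\sce)}$ normalizes $\|v(0)\|_{\dot H^s}=1$; in the mass-critical regime $\sce=0$ (the case actually needed for Theorems $\ref{theorem mass concentration NLFS}$ and $\ref{theorem limiting profile minimal mass NLFS}$) one has moreover $\|v(0)\|_{L^2}=\|u_0\|_{L^2}$, so $v(0)$ is bounded in $H^s$ by a constant depending only on $\|u_0\|_{L^2}$, and Proposition $\ref{prop local well posedness non radial}$ gives $v$ on a fixed interval $[0,\tau_*]$ with $\tau_*=\tau_*(\|u_0\|_{L^2},d,s)>0$. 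Undoing the rescaling, $u$ exists on $[t_0,t_0+\lambda^{2s}\tau_*]$ with $\lambda^{2s}=\|u(t_0)\|_{\dot H^s}^{-2s/(s-\sce)}$, which is the required bound; the remaining (supercritical, non-radial) cases follow by the same scheme, tracking the extra dependence of the local existence time on $\|v(0)\|_{L^2}$ and using that, near $T$, mass conservation makes $\|u(t)\|_{H^s}\sim\|u(t)\|_{\dot H^s}$.

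The arithmetic identity $\alpha/\theta=2s/(s-\sce)$ and the blow-up-alternative bookkeeping are routine. The one genuinely delicate point is extracting the sharp power $\|u(t_0)\|_{\dot H^s}^{-2s/(s-\sce)}$ from the non-radial local theory: because the non-radial Strichartz estimates lose derivatives, the contraction must be run in the full $H^s$ space rather than in $\dot H^s$, and one has to invoke the conservation of mass to restore the correct scaling behavior of the existence time.
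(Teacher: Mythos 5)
Your argument is essentially the paper's own proof: rescale by $\lambda=\|u(t_0)\|_{\dot H^s}^{-1/(s-\sce)}$ so that the datum has unit $\dot H^s$-norm, invoke the local theory to get a uniform existence time for the rescaled solution, and undo the scaling to obtain $(\ref{blowup rate NLFS})$; your radial computation $\alpha/\theta=2s/(s-\sce)$ just makes explicit the dependence of $\zeta$ on $\|u_0\|_{\dot H^s}$ that the paper's proof of Proposition $\ref{prop local well posedness radial}$ already records. The one delicate point you correctly flag — that in the non-radial mass-supercritical cases the rescaled datum's $L^2$-norm is $\lambda^{-\sce}\|u_0\|_{L^2}$ and hence not uniformly bounded — is glossed over in the paper as well, and is harmless for the mass-critical applications actually made of this corollary.
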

\begin{proof}
	We follow the argument of Merle-Raphael \cite{MerleRaphael}. Let $0<t<T$ be fixed. We define
	\[
	v_t(\tau, x):= \lambda^{\frac{2s}{\alpha}}(t) u(t+ \lambda^{2s}(t) \tau, \lambda(t) x),
	\]
	with $\lambda(t)$ to be chosen shortly. We see that $v_t$ is well-defined for 
	\[
	t+\lambda^{2s}(t) \tau <T \quad \text{or} \quad \tau < \lambda^{-2s}(t)(T-t).
	\]
	Moreover, $v_t$ solves 
	\[
	i \partial_\tau v_t -(-\Delta)^s v_t =\mu |v_t|^\alpha v_t, \quad v_t(0) = \lambda^{\frac{2s}{\alpha}}(t) u(t, \lambda(t) x).
	\]
	A direct computation shows
	\[
	\|v_t(0)\|_{\dot{H}^s} = \lambda^{s-\sct}(t) \|u(t)\|_{\dot{H}^s}. 
	\]
	Since $s>\sct$, we choose $\lambda(t)$ so that $\|v_t(0)\|_{\dot{H}^s}=1$. Thanks to the local theory, there exists $\tau_0>0$ such that $v_t$ is defined on $[0, \tau_0]$. This shows that
	\[
	\tau_0 < \lambda^{-2s}(t) (T-t) \quad \text{or} \quad \|u(t)\|_{\dot{H}^s} >\frac{\tau_0}{(T-t)^{\frac{s-\sct}{2s}}}.
	\]
	The proof is complete.
\end{proof}
\section{Profile decomposition} \label{section profile decomposition}
\setcounter{equation}{0}
In this subsection, we use the profile decomposition for bounded consequences in $H^s$ to show a compactness lemma related to the focusing mass-critical $(\ref{NLFS})$. 
\begin{theorem}[Profile decomposition] \label{theorem profile decomposition masscritical NLFS}
	Let $d\geq 1$ and $0<s<1$. Let $(v_n)_{n\geq 1}$ be a bounded sequence in $H^s$. Then there exist a subsequence of $(v_n)_{n\geq 1}$ (still denoted $(v_n)_{n\geq 1}$), a family $(x_n^j)_{j\geq 1}$ of sequences in $\R^d$ and a sequence $(V^j)_{j\geq 1}$ of $H^s$ functions such that
	\begin{itemize}
		\item for every $k\ne j$,
		\begin{align}
		|x_n^k - x_n^j| \rightarrow \infty, \quad \text{as } n \rightarrow \infty, \label{pairwise orthogonality masscritical NLFS}
		\end{align}
		\item for every $l\geq 1$ and every $x \in \R^d$,
		\[
		v_n(x) = \sum_{j=1}^l V^j(x-x_n^j) + v_n^l(x),
		\]
		with
		\begin{align}
		\limsup_{n\rightarrow \infty} \|v^l_n\|_{L^q} \rightarrow 0, \quad \text{as } l \rightarrow \infty, \label{profile error masscritical NLFS}
		\end{align}
		for every $q \in (2, 2^\star)$, where 
		\[
		2^\star:= \left\{
		\begin{array}{cl}
		\frac{2}{1-2s} &\text{if } d=1, s \in \left(0,\frac{1}{2}\right),\\
		\infty &\text{if } d=1, s \in \left[\frac{1}{2},1\right), \\
		\frac{2d}{d-2s} &\text{if } d\geq 2, s \in (0,1).
		\end{array}
		\right.
		\]
		Moreover, 
		\begin{align}
		\|v_n\|^2_{L^2} &= \sum_{j=1}^l \|V^j\|^2_{L^2} + \|v^l_n\|^2_{L^2} + o_n(1), \label{profile identity 1 masscritical NLFS} \\
		\|v_n\|^2_{\dot{H}^s} &= \sum_{j=1}^l \|V^j\|^2_{\dot{H}^s} + \|v^l_n\|^2_{\dot{H}^s} + o_n(1), \label{profile identity 2 masscritical NLFS}
		\end{align}
		as $n\rightarrow \infty$.
	\end{itemize}
\end{theorem}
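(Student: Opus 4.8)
The plan is to follow the inductive extraction scheme of Hmidi--Keraani, adapted to the fractional Sobolev space $H^s$. The central device is a concentration-compactness/profile-extraction lemma: if $(f_n)_{n\geq 1}$ is bounded in $H^s$ with $\limsup_n \|f_n\|_{L^q}\geq\delta>0$ for some fixed $q\in(2,2^\star)$, then one can find a sequence $(x_n)$ in $\R^d$ and a nonzero $V\in H^s$ such that, up to a subsequence, $f_n(\cdot+x_n)\rightharpoonup V$ weakly in $H^s$ with a quantitative lower bound $\|V\|_{H^s}\gtrsim \delta^{\beta}/(\sup_n\|f_n\|_{H^s})^{\beta'}$ for suitable exponents. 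First I would establish this lemma: by the Rellich--Kondrachov theorem the embedding $H^s(\R^d)\hookrightarrow L^q_{\mathrm{loc}}$ is compact, but to capture a nontrivial limit after translation one decomposes $\R^d$ into unit cubes, uses the elementary interpolation inequality $\|f\|_{L^q(\R^d)}^q\lesssim \big(\sup_{Q}\|f\|_{L^q(Q)}\big)^{q-2}\|f\|_{L^2(\R^d)}^2$ together with the uniform bound $\|f\|_{L^q(Q)}\lesssim\|f\|_{H^s(Q')}$ on slightly enlarged cubes; this forces the existence of a cube $Q_n$ on which $\|f_n\|_{L^q(Q_n)}$ stays bounded below, and translating its center to the origin and passing to a weak limit yields $V\neq 0$ by the local compact embedding. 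Tracking constants through this chain gives the quantitative bound.

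Next I would run the induction on $j$. Set $v_n^0:=v_n$. Given $v_n^{l-1}$, let $\mu_l:=\limsup_n\|v_n^{l-1}\|_{L^q}$ (for a fixed reference exponent $q\in(2,2^\star)$). If $\mu_l=0$ we stop and set $V^l=0$; otherwise apply the extraction lemma to obtain $(x_n^l)$ and a nonzero $V^l\in H^s$ with $v_n^{l-1}(\cdot+x_n^l)\rightharpoonup V^l$, and define $v_n^l(x):=v_n^{l-1}(x)-V^l(x-x_n^l)$, so that $v_n(x)=\sum_{j=1}^l V^j(x-x_n^j)+v_n^l(x)$ by construction. Because weak convergence in $H^s$ implies $\|v_n^{l-1}(\cdot+x_n^l)\|_{L^2}^2=\|V^l\|_{L^2}^2+\|v_n^l(\cdot+x_n^l)\|_{L^2}^2+o_n(1)$ and similarly for $\|\cdot\|_{\dot H^s}^2$ (orthogonality of weak limit and remainder in a Hilbert space), and since translations are isometries on both $L^2$ and $\dot H^s$, iterating gives the Pythagorean identities \eqref{profile identity 1 masscritical NLFS}--\eqref{profile identity 2 masscritical NLFS}. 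The identity for $\|v_n\|_{L^2}^2$ shows $\sum_{j\geq 1}\|V^j\|_{L^2}^2\leq\sup_n\|v_n\|_{L^2}^2<\infty$, hence $\|V^j\|_{L^2}\to 0$; combined with the uniform $H^s$ bound from \eqref{profile identity 2 masscritical NLFS} and the Gagliardo--Nirenberg inequality $\|V^j\|_{L^q}\lesssim\|V^j\|_{\dot H^s}^{\sigma}\|V^j\|_{L^2}^{1-\sigma}$ this forces $\|V^j\|_{L^q}\to 0$. The quantitative lower bound from the extraction lemma then reads $\mu_l\lesssim (\text{const})\,\|V^l\|_{L^q}^{1-\tau}\cdot(\dots)$, so $\mu_l\to 0$ as $l\to\infty$, which is precisely \eqref{profile error masscritical NLFS} for the reference exponent $q$; a further interpolation between $L^2$ and a second exponent in $(2,2^\star)$ upgrades this to all $q\in(2,2^\star)$.

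There are two remaining points. The pairwise orthogonality \eqref{pairwise orthogonality masscritical NLFS} is obtained by contradiction in the standard way: if $|x_n^k-x_n^j|$ stayed bounded along a subsequence for some $k>j$, then shifting the equation $v_n^{j}(\cdot+x_n^{j})\rightharpoonup 0$ (which holds by construction of $v_n^j$ as the remainder after extracting $V^j$) by the bounded translation $x_n^k-x_n^j$ would force the weak limit defining $V^k$ to vanish, contradicting $V^k\neq 0$; here one uses that a bounded sequence of translation parameters has a convergent subsequence and that translation by a convergent sequence is continuous for the weak topology on $H^s$. Finally one must check that a diagonal extraction produces a single subsequence along which all of the above holds simultaneously for every $l$; this is routine since at each stage only a subsequence is passed to. I expect the main obstacle to be proving the quantitative extraction lemma with the correct dependence of the lower bound on $\|V^j\|_{L^q}$ and the $H^s$-bound of the sequence — this is what makes $\mu_l\to 0$ work and must be stated sharply enough for the compactness lemma Theorem \ref{theorem compactness lemma masscritical NLFS} to follow; the rest is bookkeeping with Hilbert-space orthogonality and Gagliardo--Nirenberg interpolation, the only fractional-specific input being the correct value of $2^\star$ and the Sobolev embedding $H^s\hookrightarrow L^q$ for $q<2^\star$.
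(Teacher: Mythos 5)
Your argument is correct in outline, but it takes a genuinely different route from the paper at the one nontrivial step, namely the vanishing \eqref{profile error masscritical NLFS} of the remainder. The paper (following Hmidi--Keraani) never proves a quantitative ``inverse Sobolev embedding'': it measures the defect of compactness by the functional $\eta(v_n^l)=\sup\{\|v\|_{L^2}+\|v\|_{\dot{H}^s}\,:\,v\in\Omega(v_n^l)\}$, where $\Omega(v_n^l)$ is the set of weak limits of translates, extracts each profile so as to capture at least half of $\eta$, and only afterwards converts $\eta(v_n^l)\to 0$ into $L^q$-decay of the remainder by a Fourier truncation: writing $v_n^l=\chi_R*v_n^l+(\delta-\chi_R)*v_n^l$, the high-frequency piece is $O(R^{\beta-s})$ in $L^q$ by Sobolev embedding with $\beta=\frac{d}{2}-\frac{d}{q}$, while $\limsup_n\|\chi_R*v_n^l\|_{L^\infty}\lesssim R^{d/2}\eta(v_n^l)$ because $\chi_R*v_n^l(x_n)$ is, along a subsequence, the pairing of $\chi_R$ against an element of $\Omega(v_n^l)$; optimizing in $R$ gives \eqref{profile error masscritical NLFS} for all $q\in(2,2^\star)$ at once. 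Your scheme instead drives the induction by $\mu_l=\limsup_n\|v_n^{l-1}\|_{L^q}$ and therefore needs the unit-cube concentration lemma up front; this is the Lions/G\'erard route, it is perfectly legitimate, and it has the advantage of producing a quantitative extraction statement closer in spirit to Theorem \ref{theorem compactness lemma masscritical NLFS}. The price is precisely the step you flag as the main obstacle, and two details there deserve care: (i) the interpolation inequality should read $\|f\|_{L^q}^q\lesssim\bigl(\sup_Q\|f\|_{L^q(Q)}\bigr)^{q-2}\sum_Q\|f\|_{H^s(Q)}^2$ rather than with $\|f\|_{L^2}^2$ on the right, and (ii) the almost-orthogonality $\sum_Q\|f\|_{H^s(Q)}^2\lesssim\|f\|_{H^s}^2$ over a tiling by unit cubes is not automatic for fractional $s$ (the operator $|\nabla|^s$ is nonlocal); it can be obtained from the Gagliardo seminorm by restricting the double integral to $Q\times Q$, or by a partition of unity with summable commutator tails. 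The paper's Fourier-side argument avoids this localization issue entirely, which is presumably why it is the chosen route; the remaining ingredients of your proposal (Hilbert-space Pythagorean expansions, the contradiction argument for \eqref{pairwise orthogonality masscritical NLFS}, the diagonal extraction, and the upgrade to all $q\in(2,2^\star)$ by interpolating the reference exponent against both $L^2$ and an exponent near $2^\star$) match the paper and are fine.
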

\begin{proof}
	The proof is similar to the one given by Hmidi-Keraani \cite[Proposition 3.1]{HmidiKeraani}. For reader's convenience, we recall some details. 
	Since $H^s$ is a Hilbert space, we denote $\Omega(v_n)$ the set of functions obtained as weak limits of sequences of the translated $v_n(\cdot + x_n)$ with $(x_n)_{n\geq 1}$ a sequence in $\R^d$. Denote
	\[
	\eta(v_n):= \sup \{ \|v\|_{L^2} + \|v\|_{\dot{H}^s} : v \in \Omega(v_n)\}.
	\]
	Clearly,
	\[
	\eta(v_n) \leq \limsup_{n\rightarrow \infty} \|v_n\|_{L^2} + \|v_n\|_{\dot{H}^s}.
	\]
	We shall prove that there exist a sequence $(V^j)_{j\geq 1}$ of $\Omega(v_n)$ and a family $(x_n^j)_{j\geq 1}$ of sequences in $\R^d$ such that for every $k \ne j$,
	\[
	|x_n^k - x_n^j| \rightarrow \infty, \quad \text{as } n \rightarrow \infty,
	\]
	and up to a subsequence, the sequence $(v_n)_{n\geq 1}$ can be written as for every $l\geq 1$ and every $x \in \R^d$,
	\[
	v_n(x) = \sum_{j=1}^l V^j(x-x_n^j) + v^l_n(x), 
	\]
	with $\eta(v^l_n) \rightarrow 0$ as $l \rightarrow \infty$. Moreover, the identities $(\ref{profile identity 1 masscritical NLFS})$ and $(\ref{profile identity 2 masscritical NLFS})$ hold as $n \rightarrow \infty$. \newline
	\indent Indeed, if $\eta(v_n) =0$, then we can take $V^j=0$ for all $j\geq 1$. Otherwise we choose $V^1 \in \Omega(v_n)$ such that
	\[
	\|V^1\|_{L^2} + \|V^1\|_{\dot{H}^s} \geq \frac{1}{2} \eta(v_n) >0. 
	\]
	By the definition of $\Omega(v_n)$, there exists  a sequence $(x^1_n)_{n\geq 1} \subset \R^d$ such that up to a subsequence,
	\[
	v_n(\cdot + x^1_n) \rightharpoonup V^1 \text{ weakly in } H^s.
	\]
	Set $v_n^1(x):= v_n(x) - V^1(x-x^1_n)$. We see that $v^1_n(\cdot + x^1_n) \rightharpoonup 0$ weakly in $H^s$ and
	thus
	\begin{align*}
	\|v_n\|^2_{L^2} &= \|V^1\|^2_{L^2} + \|v^1_n\|^2_{L^2} + o_n(1),  \\
	\|v_n\|^2_{\dot{H}^s} &= \|V^1\|^2_{\dot{H}^s} + \|v^1_n\|^2_{\dot{H}^s} + o_n(1),
	\end{align*}
	as $n \rightarrow \infty$. We now replace $(v_n)_{n\geq 1}$ by $(v^1_n)_{n\geq 1}$ and repeat the same process. If $\eta(v^1_n) =0$, then we choose $V^j=0$ for all $j \geq 2$. Otherwise there exist $V^2 \in \Omega(v^1_n)$ and a sequence $(x^2_n)_{n\geq 1} \subset \R^d$ such that
	\[
	\|V^2\|_{L^2} + \|V^2\|_{\dot{H}^s} \geq \frac{1}{2} \eta(v^1_n)>0,
	\]
	and
	\[
	v^1_n(\cdot+x^2_n) \rightharpoonup V^2 \text{ weakly in } H^s.
	\]
	Set $v^2_n(x) := v^1_n(x) - V^2(x-x^2_n)$. We thus have
	$v^2_n(\cdot +x^2_n) \rightharpoonup 0$ weakly in $H^s$ and 
	\begin{align*}
	\|v^1_n\|^2_{L^2} & = \|V^2\|^2_{L^2} + \|v^2_n\|^2_{L^2} + o_n(1), \\
	\|v^1_n\|^2_{\dot{H}^s} &= \|V^2\|^2_{\dot{H}^s} + \|v^2_n\|^2_{\dot{H}^s} + o_n(1),
	\end{align*}
	as $n \rightarrow \infty$. We claim that 
	\[
	|x^1_n - x^2_n| \rightarrow \infty, \quad \text{as } n \rightarrow \infty.
	\]
	In fact, if it is not true, then up to a subsequence, $x^1_n - x^2_n \rightarrow x_0$ as $n \rightarrow \infty$ for some $x_0 \in \R^d$. Since 
	\[
	v^1_n(x + x^2_n) = v^1_n(x +(x^2_n -x^1_n) + x^1_n),
	\]
	and $v^1_n (\cdot + x^1_n)$ converges weakly to $0$, we see that $V^2=0$. This implies that $\eta(v^1_n)=0$ and it is a contradiction. An argument of iteration and orthogonal extraction allows us to construct the family $(x^j_n)_{j\geq 1}$ of sequences in $\R^d$ and the sequence $(V^j)_{j\geq 1}$ of $H^s$ functions satisfying the claim above. Furthermore, the convergence of the series $\sum_{j\geq 1}^\infty \|V^j\|^2_{L^2} + \|V^j\|^2_{\dot{H}^s}$ implies that 
	\[
	\|V^j\|^2_{L^2} + \|V^j\|^2_{\dot{H}^s} \rightarrow 0, \quad \text{as } j \rightarrow \infty.
	\]
	By construction, we have
	\[
	\eta(v^j_n) \leq 2 \left(\|V^{j+1}\|_{L^2} + \|V^{j+1}\|_{\dot{H}^s}\right),
	\]
	which proves that $\eta(v^j_n) \rightarrow 0$ as $j \rightarrow \infty$. To complete the proof of Theorem $\ref{theorem profile decomposition masscritical NLFS}$, it remains to show $(\ref{profile error masscritical NLFS})$. To do so, we introduce $\theta: \R^d \rightarrow [0,1]$ satisfying $\theta(\xi) =1$ for $|\xi| \leq 1$ and $\theta(\xi) =0$ for $|\xi|\geq 2$. Given $R>0$, define
	\[
	\hat{\chi}_R(\xi) := \theta(\xi/R),
	\]
	where $\hat{\cdot}$ is the Fourier transform of $\chi$. In particular, we have $\hat{\chi}_R (\xi) =1$ if $|\xi| \leq R$ and $\hat{\chi}_R(\xi) =0$ if $|\xi| \geq 2R$. We write
	\[
	v^l_n = \chi_R * v^l_n + (\delta - \chi_R) * v^l_n,
	\]
	where $*$ is the convolution operator. Let $q \in (2, 2^\star)$ be fixed. By Sobolev embedding and the Plancherel formula, we have
	\begin{align*}
	\|(\delta -\chi_R) * v^l_n\|_{L^q} \lesssim \|(\delta-\chi_R) * v^l_n\|_{\dot{H}^\beta} &\lesssim \Big( \int |\xi|^{2\beta} |(1-\hat{\chi}_R(\xi)) \hat{v}^l_n(\xi)|^2 d\xi\Big)^{1/2} \\
	&\lesssim R^{\beta-s} \|v^l_n\|_{H^s},
	\end{align*}
	where $\beta=\frac{d}{2}-\frac{d}{q} \in (0,s)$. On the other hand, the H\"older interpolation inequality implies
	\begin{align*}
	\|\chi_R * v^l_n\|_{L^q} &\lesssim \|\chi_R * v^l_n\|^{\frac{2}{q}}_{L^2} \|\chi_R * v^l_n\|^{1-\frac{2}{q}}_{L^\infty} \\
	&\lesssim \|v^l_n\|^{\frac{2}{q}}_{L^2} \|\chi_R * v^l_n\|^{1-\frac{2}{q}}_{L^\infty}.
	\end{align*}
	Observe that
	\[
	\limsup_{n\rightarrow \infty} \|\chi_R * v^l_n\|_{L^\infty} = \sup_{x_n} \limsup_{n\rightarrow \infty} |\chi_R * v^l_n(x_n)|.
	\]
	Thus, by the definition of $\Omega(v^l_n)$, we infer that
	\[
	\limsup_{n\rightarrow \infty} \|\chi_R * v^l_n\|_{L^\infty} \leq \sup \Big\{ \Big| \int \chi_R(-x) v(x) dx\Big| : v \in \Omega(v^l_n)\Big\}.
	\]
	By the Plancherel formula, we have
	\begin{align*}
	\Big|\int \chi_R(-x) v(x) dx \Big| &= \Big| \int \hat{\chi}_R(\xi) \hat{v}(\xi) d\xi\Big| \lesssim \|\hat{\chi}_R\|_{L^2} \|v\|_{L^2} \\
	&\lesssim R^{\frac{d}{2}} \|\theta\|_{L^2} \|v\|_{L^2} \lesssim R^{\frac{d}{2}} \eta(v^l_n).
	\end{align*}
	We thus obtain for every $l\geq 1$,
	\begin{align*}
	\limsup_{n\rightarrow \infty} \|v^l_n\|_{L^q} &\lesssim \limsup_{n\rightarrow \infty} \|(\delta-\chi_R)* v^l_n\|_{L^q} + \limsup_{n\rightarrow \infty} \|\chi_R * v^l_n\|_{L^q} \\
	&\lesssim R^{\beta-s} \|v^l_n\|_{H^s} + \|v^l_n\|^{\frac{2}{q}}_{L^2} \left[R^{\frac{d}{2}} \eta(v^l_n)\right]^{\left(1-\frac{2}{q}\right)}.
	\end{align*}
	Choosing $R= \left[\eta(v^l_n)^{-1}\right]^{\frac{2}{d}-\eps}$ for some $\eps>0$ small enough, we see that 
	\[
	\limsup_{n\rightarrow \infty} \|v^l_n\|_{L^q} \lesssim \eta(v^l_n)^{(s-\beta)\left(\frac{2}{d}-\eps\right)} \|v^l_n\|_{H^s} + \eta(v^l_n)^{\eps \frac{d}{2} \left(1-\frac{2}{q}\right)} \| v^l_n\|_{L^2}^{\frac{2}{q}}.
	\]
	Letting $l \rightarrow \infty$ and using the fact that $\eta(v^l_n) \rightarrow 0$ as $l \rightarrow \infty$ and the uniform boundedness in $H^s$ of $(v^l_n)_{l\geq 1}$, we obtain 
	\[
	\limsup_{n \rightarrow \infty} \|v^l_n\|_{L^q} \rightarrow 0, \quad \text{as } l \rightarrow \infty.
	\]
	The proof is complete.
\end{proof} 
We are now able to give the proof of the concentration compactness lemma given in Theorem $\ref{theorem compactness lemma masscritical NLFS}$. 

\noindent \textit{Proof of Theorem $\ref{theorem compactness lemma masscritical NLFS}$.} According to Theorem $\ref{theorem profile decomposition masscritical NLFS}$, there exist a sequence $(V^j)_{j\geq 1}$ of $H^s$ functions and a family $(x^j_n)_{j\geq 1}$ of sequences in $\R^d$ such that up to a subsequence, the sequence $(v_n)_{n\geq 1}$ can be written as
\[
v_n(x) = \sum_{j=1}^l V^j(x-x^j_n) + v^l_n(x),
\]
and $(\ref{profile error masscritical NLFS})$, $(\ref{profile identity 1 masscritical NLFS}), (\ref{profile identity 2 masscritical NLFS})$ hold. This implies that
\begin{align}
m^{\frac{4s}{d}+2} &\leq \limsup_{n\rightarrow \infty} \|v_n\|_{L^{\frac{4s}{d}+2}}^{\frac{4s}{d}+2} = \limsup_{n\rightarrow \infty} \Big\| \sum_{j=1}^l V^j(\cdot -x^j_n) + v^l_n\Big\|^{\frac{4s}{d}+2}_{L^{\frac{4s}{d}+2}} \nonumber \\
&\leq \limsup_{n\rightarrow \infty} \Big( \Big\|\sum_{j=1}^l V^j(\cdot -x^j_n) \Big\|_{L^{\frac{4s}{d}+2}} + \|v^l_n\|_{L^{\frac{4s}{d}+2}}\Big)^{\frac{4s}{d}+2} \nonumber \\
&\leq \limsup_{n\rightarrow \infty} \Big\| \sum_{j=1}^\infty V^j(\cdot -x^j_n)\Big\|_{L^{\frac{4s}{d}+2}}^{\frac{4s}{d}+2}. \label{compactness lemma proof masscritical NLFS}
\end{align}
By the elementary inequality 
\[
\left| \Big| \sum_{j=1}^l a_j\Big|^{\frac{4s}{d} +2} - \sum_{j=1}^l |a_j|^{\frac{4s}{d}+2}\right| \leq C \sum_{j \ne k} |a_j| |a_k|^{\frac{4s}{d}+1},
\]
we have
\begin{align*}
\int \Big| \sum_{j=1}^l V^j(x -x^j_n)\Big|^{\frac{4s}{d}+2} dx &\leq \sum_{j=1}^l \int |V^j(x-x^j_n)|^{\frac{4s}{d}+2} dx + C \sum_{j\ne k} \int |V^j(x-x^j_n)||V^k(x-x^k_n)|^{\frac{4s}{d}+1} dx \\
&\leq \sum_{j=1}^l \int |V^j(x-x^j_n)|^{\frac{4s}{d}+2} dx +  C \sum_{j \ne k} \int |V^j(x+ x^k_n-x^j_n)| |V^k(x)|^{\frac{4s}{d}+1} dx.
\end{align*}
Using the pairwise orthogonality $(\ref{pairwise orthogonality masscritical NLFS})$, the H\"older inequality implies that $V^j(\cdot + x^k_n-x^j_n) \rightharpoonup 0$ in $H^s$ as $n \rightarrow \infty$ for any $j \ne k$. This leads to the mixed terms in the sum $(\ref{compactness lemma proof masscritical NLFS})$ vanish as $n\rightarrow \infty$. We thus get
\[
m^{\frac{4s}{d}+2} \leq \sum_{j=1}^\infty \|V^j\|_{L^{\frac{4s}{d}+2}}^{\frac{4s}{d}+2}.
\]
We next use the sharp Gagliardo-Nirenberg inequality $(\ref{sharp gagliardo nirenberg inequality masscritical NLFS})$ to estimate
\begin{align}
\sum_{j=1}^\infty \|V^j\|^{\frac{4s}{d}+2}_{L^{\frac{4s}{d}+2}} \leq \frac{2s+d}{d} \|Q\|_{L^2}^{-\frac{4s}{d}} \sup_{j\geq 1} \|V^j\|^{\frac{4s}{d}}_{L^2} \sum_{j=1}^\infty \|V^j\|^2_{\dot{H}^s}. \label{compactness lemma proof 1 masscritical NLFS}
\end{align}
By $(\ref{profile identity 2 masscritical NLFS})$, we infer that
\[
\sum_{j=1}^\infty \|V^j\|^2_{\dot{H}^s}  \leq \limsup_{n\rightarrow \infty} \|v_n\|^2_{\dot{H}^s} \leq M^2.
\]
Therefore,
\[
\sup_{j\geq 1} \|V^j\|^{\frac{4s}{d}}_{L^2} \geq \frac{d}{d+2s} \frac{m^{\frac{4s}{d}+2}}{M^2} \|Q\|_{L^2}^{\frac{4s}{d}}.
\]
Since the series $\sum_{j\geq 1} \|V^j\|^2_{L^2}$ is convergent, the supremum above is attained. In particular, there exists $j_0$ such that
\[
\|V^{j_0}\|^{\frac{4s}{d}}_{L^2} \geq \frac{d}{d+2s} \frac{m^{\frac{4s}{d}+2}}{M^2} \|Q\|_{L^2}^{\frac{4s}{d}}.
\]
By a change of variables, we write
\[
v_n(x+ x^{j_0}_n) = V^{j_0} (x) + \sum_{1\leq j \leq l \atop j \ne j_0} V^j(x+ x_n^{j_0} - x^j_n) + \tilde{v}^l_n(x),
\]
where $\tilde{v}^l_n(x):= v^l_n(x+x^{j_0}_n)$. The pairwise orthogonality of the family $(x_n^j)_{j\geq 1}$ implies
\[
V^j( \cdot +x^{j_0}_n -x^j_n) \rightharpoonup 0 \text{ weakly in } H^s,
\]
as $n \rightarrow \infty$ for every $j \ne j_0$. We thus get
\begin{align}
v_n(\cdot + x^{j_0}_n) \rightharpoonup V^{j_0} + \tilde{v}^l, \quad \text{as } n \rightarrow \infty, \label{compactness lemma proof 2 masscritical NLFS}
\end{align}
where $\tilde{v}^l$ is the weak limit of $(\tilde{v}^l_n)_{n\geq 1}$. On the other hand, 
\[
\|\tilde{v}^l\|_{L^{\frac{4s}{d}+2}} \leq \limsup_{n\rightarrow \infty} \|\tilde{v}^l_n\|_{L^{\frac{4s}{d}+2}} = \limsup_{n\rightarrow \infty} \|v^l_n\|_{L^{\frac{4s}{d}+2}} \rightarrow 0, \quad \text{as } l \rightarrow \infty. 
\]
By the uniqueness of the weak limit $(\ref{compactness lemma proof 2 masscritical NLFS})$, we get $\tilde{v}^l=0$ for every $l \geq j_0$. Therefore, we obtain 
\[
v_n(\cdot + x^{j_0}_n) \rightharpoonup V^{j_0}.
\]
The sequence $(x^{j_0}_n)_{n\geq 1}$ and the function $V^{j_0}$ now fulfill the conditions of Theorem $\ref{theorem compactness lemma masscritical NLFS}$. The proof is complete.
\defendproof 

\section{Blow-up concentration} \label{section blowup concentration}
\setcounter{equation}{0}
In this section, we give the proof of the mass-concentration of finite time blow-up solutions given in Theorem $\ref{theorem mass concentration NLFS}$. 

\noindent \textit{Proof of Theorem $\ref{theorem mass concentration NLFS}$.}
Let $(t_n)_{n\geq 1}$ be a sequence such that $t_n \uparrow T$. Set
\[
\lambda_n := \left(\frac{\|Q\|_{\dot{H}^s}}{\|u(t_n)\|_{\dot{H}^s}}\right)^{\frac{1}{s}}, \quad v_n(x):= \lambda_n^{\frac{d}{2}} u(t_n, \lambda_n x). 
\]
By the blow-up alternative, we see that $\lambda_n \rightarrow 0$ as $n \rightarrow \infty$. Moreover, we have
\begin{align*}
\|v_n\|_{L^2} = \|u(t_n)\|_{L^2} =\|u_0\|_{L^2},
\end{align*}
and 
\[
\|v_n\|_{\dot{H}^s} = \lambda_n^s\|u(t_n)\|_{\dot{H}^s}= \|Q\|_{\dot{H}^s},
\] 
and
\[
E(v_n) = \lambda_n^{2s} E(u(t_n)) = \lambda_n^{2s} E(u_0) \rightarrow 0, \quad \text{as } n \rightarrow \infty. 
\]
This implies in particular that
\begin{align*}
\|v_n\|^{\frac{4s}{d}+2}_{L^{\frac{4s}{d}+2}} \rightarrow \frac{d+2s}{d} \|Q\|^2_{\dot{H}^s}, \quad \text{as } n \rightarrow \infty. 
\end{align*}
The sequence $(v_n)_{n\geq 1}$ satisfies the conditions of Theorem $\ref{theorem compactness lemma masscritical NLFS}$ with 
\[
m^{\frac{4s}{d}+2} = \frac{d+2s}{d} \|Q\|^2_{\dot{H}^s}, \quad M^2 = \|Q\|^2_{\dot{H}^s}. 
\]
Therefore, there exists a sequence $(x_n)_{n\geq 1}$ in $\R^d$ such that up to a subsequence,
\[
v_n(\cdot + x_n)  = \lambda_n^{\frac{d}{2}} u(t_n, \lambda_n \cdot + x_n) \rightharpoonup V \text{ weakly in } H^s,
\]
as $n \rightarrow \infty$ with $\|V\|_{L^2} \geq \|Q\|_{L^2}$. In particular, 
\[
v(\cdot + x_n) = \lambda_n^{\frac{d}{2}} u(t_n, \lambda_n \cdot + x_n) \rightharpoonup V \text{ weakly in } L^2. 
\]
This implies for every $R>0$,
\[
\liminf_{n\rightarrow \infty} \int_{|x|\leq R} \lambda_n^{d}| u(t_n, \lambda_n x + x_n)|^2 dx \geq \int_{|x|\leq R} |V(x)|^2 dx,
\]
or 
\[
\liminf_{n\rightarrow \infty} \int_{|x-x_n|\leq R\lambda_n} |u(t_n, x)|^2 dx \geq \int_{|x|\leq R} |V(x)|^2 dx.
\]
Since 
\[
a(t_n) \|u(t_n)\|^{\frac{1}{s}}_{\dot{H}^s} = \frac{a(t_n)}{\lambda_n} \|Q\|^{\frac{1}{s}}_{\dot{H}^s},
\]
the assumption $(\ref{concentration assumption masscritical NLFS})$ implies $\frac{a(t_n)}{\lambda_n} \rightarrow \infty$ as $n\rightarrow \infty$. We thus get
\[
\liminf_{n\rightarrow \infty} \sup_{y\in \R^d} \int_{|x-y|\leq a(t_n)} |u(t_n, x)|^2 dx \geq \int_{|x|\leq R} |V(x)|^2 dx,
\]
for every $R>0$, which means that
\[
\liminf_{n\rightarrow \infty} \sup_{y \in \R^d} \int_{|x-y|\leq a(t_n)} |u(t_n, x)|^2 dx \geq \int |V(x)|^2 dx \geq  \int |Q(x)|^2 dx.
\]
Since the sequence $(t_n)_{n\geq 1}$ is arbitrary, we infer that
\[
\liminf_{t\uparrow T} \sup_{y\in \R^d} \int_{|x-y|\leq a(t)} |u(t,x)|^2 dx \geq \int |Q(x)|^2 dx.
\]
But for every $t \in (0,T)$, the function $y\mapsto \int_{|x-y| \leq a(t)} |u(t,x)|^2 dx$ is continuous and goes to zero at infinity. As a result, we get
\[
\sup_{y\in \R^d} \int_{|x-y|\leq a(t)} |u(t,x)|^2 dx = \int_{|x-x(t)| \leq a(t)} |u(t,x)|^2 dx,
\]
for some $x(t) \in \R^d$. This shows $(\ref{mass concentration NLFS})$. The proof is complete.
\defendproof
\section{Limiting profile with minimal mass} \label{section limiting profile}
\setcounter{equation}{0}
In this section, we give the proof of the limiting profile given in Theorem $\ref{theorem limiting profile minimal mass NLFS}$. Let us start with the following characterization of solution with minimal mass.
\begin{lem} \label{lem characterization minimal mass NLFS}
	Let $d\geq 1$ and $0<s<1$. If $u \in H^s$ is such that $\|u\|_{L^2}=\|Q\|_{L^2}$ and $E(u)=0$, then $u$ is of the form
	\[
	u(x) = e^{i\theta} \lambda^{\frac{d}{2}} Q(\lambda x + x_0),
	\]
	for some $\theta \in \R, \lambda>0$ and $x_0 \in \R^d$. 
\end{lem}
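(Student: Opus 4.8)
The plan is to reduce the statement to the equality case in the sharp Gagliardo--Nirenberg inequality $(\ref{sharp gagliardo nirenberg inequality masscritical NLFS})$. First, note $u \ne 0$, since $\|u\|_{L^2} = \|Q\|_{L^2} > 0$ (which also forces $\|u\|_{\dot H^s} > 0$, as an $L^2$ function with vanishing $\dot H^s$-seminorm is zero). The hypothesis $E(u) = 0$ reads
\[
\frac12 \|u\|_{\dot H^s}^2 = \frac{1}{\frac{4s}{d}+2} \|u\|_{L^{\frac{4s}{d}+2}}^{\frac{4s}{d}+2}, \qquad \text{i.e.} \qquad \|u\|_{\dot H^s}^2 = \frac{d}{d+2s}\,\|u\|_{L^{\frac{4s}{d}+2}}^{\frac{4s}{d}+2}.
\]
On the other hand, inserting $\|u\|_{L^2} = \|Q\|_{L^2}$ into $(\ref{sharp gagliardo nirenberg inequality masscritical NLFS})$ and using the value of $C_{\text{GN}}$ gives $\|u\|_{L^{\frac{4s}{d}+2}}^{\frac{4s}{d}+2} \le \frac{d+2s}{d}\|u\|_{\dot H^s}^2$. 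Combining the two relations forces equality throughout, so $u$ is an optimizer of $(\ref{sharp gagliardo nirenberg inequality masscritical NLFS})$.

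Next, I would invoke the characterization of optimizers. The Gagliardo--Nirenberg quotient $J(f) := \|f\|_{\dot H^s}^2 \|f\|_{L^2}^{4s/d} \big/ \|f\|_{L^{\frac{4s}{d}+2}}^{\frac{4s}{d}+2}$ is invariant under dilations, translations and phase multiplication, and its infimum $C_{\text{GN}}^{-1}$ is attained at $Q$. An optimizer $f$ satisfies an Euler--Lagrange equation $a(-\Delta)^s f + b f = c|f|^{4s/d}f$ with positive constants $a,b,c$ determined by $\|f\|_{\dot H^s},\|f\|_{L^2}$; rescaling $f \mapsto \mu f(\nu\,\cdot)$ for suitable $\mu,\nu > 0$ produces a function $w$ solving exactly $(\ref{elliptic equation})$. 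Since $\|\,|w|\,\|_{\dot H^s} \le \|w\|_{\dot H^s}$ — the fractional diamagnetic inequality, immediate from the singular-integral representation of the $\dot H^s$-seminorm together with $\bigl|\,|w(x)|-|w(y)|\,\bigr| \le |w(x)-w(y)|$ — the modulus $|w|$ is again an optimizer of $J$, hence a nonnegative $H^s$ solution of $(\ref{elliptic equation})$; by the uniqueness result of \cite{FrankLenzmann, FrankLenzmannSilvestre}, $|w|$ is a translate of $Q$, and the equality case of the diamagnetic inequality forces $w = e^{i\theta_0}|w|$ for a constant $\theta_0 \in \R$. Unwinding the dilation and translation (the power $\lambda^{d/2}$ being dictated by the $L^2$-scaling used to match the mass), one concludes
\[
u(x) = e^{i\theta}\lambda^{\frac d2} Q(\lambda x + x_0)
\]
for some $\theta \in \R$, $\lambda > 0$ and $x_0 \in \R^d$.

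The main obstacle is the optimizer-characterization step: passing from ``$u$ is an optimizer of the sharp Gagliardo--Nirenberg inequality'' to ``$u$ coincides, modulo the symmetries of $(\ref{NLFS})$, with the positive radial ground state $Q$.'' This rests on the uniqueness (modulo symmetries) of $H^s$ solutions of $(\ref{elliptic equation})$, which is known for the positive radial solution by \cite{FrankLenzmann, FrankLenzmannSilvestre}, combined with the reduction of a general complex-valued optimizer to that case via the diamagnetic inequality and its equality case. Everything preceding and following that step is a short and routine computation; in an exposition one could alternatively cite the equality case of $(\ref{sharp gagliardo nirenberg inequality masscritical NLFS})$ directly from \cite{FrankLenzmannSilvestre, BoulengerHimmelsbachLenzmann}.
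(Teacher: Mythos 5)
Your proposal is correct and follows essentially the same route as the paper: deduce from $E(u)=0$ and $\|u\|_{L^2}=\|Q\|_{L^2}$ that $u$ attains equality in the sharp Gagliardo--Nirenberg inequality $(\ref{sharp gagliardo nirenberg inequality masscritical NLFS})$, then invoke the characterization of optimizers as $aQ(\lambda\cdot+x_0)$ with $|a|=\lambda^{d/2}$ fixed by the mass constraint. The paper simply cites \cite[Section 3]{FrankLenzmannSilvestre} for that characterization rather than re-deriving it via the Euler--Lagrange equation, diamagnetic inequality and ground-state uniqueness as you sketch, but the content is the same.
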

\begin{proof}
	Since $E(u)=0$, we have
	\[
	\|u\|^2_{\dot{H}^s} = \frac{d}{d+2s} \|u\|^{\frac{4s}{d}+2}_{L^{\frac{4s}{d}+2}}.
	\]
	Thus
	\[
	\frac{\|u\|^{\frac{4s}{d}+2}_{L^{\frac{4s}{d}+2}}}{\|u\|^{\frac{4s}{d}}_{L^2} \|u\|^2_{\dot{H}^s}} = \frac{d+2s}{d} \|u\|^{-\frac{4s}{d}}_{L^2} = \frac{d+2s}{d} \|Q\|_{L^2}^{-\frac{4s}{d}} = C_{\text{GN}},
	\]
	where $C_{\text{GN}}$ is the sharp constant in $(\ref{sharp gagliardo nirenberg inequality masscritical NLFS})$. By the characterization of the sharp constant to the Gagliardo-Nirenberg inequality $(\ref{sharp gagliardo nirenberg inequality masscritical NLFS})$ (see e.g. \cite[Section 3]{FrankLenzmannSilvestre}), we learn that $u$ is of the form $u(x) = a Q(\lambda x +x_0)$ for some $a \in \C^\star, \lambda>0$ and $x_0 \in \R^d$. On the other hand, since $\|u\|_{L^2} =\|Q\|_{L^2}$, we have $|a|= \lambda^{\frac{d}{2}}$. This shows the result. 
\end{proof}
We are now able to prove the limiting profile of finite time blow-up solutions with minimal mass given in Theorem $\ref{theorem limiting profile minimal mass NLFS}$. 

\noindent \textit{Proof of Theorem $\ref{theorem limiting profile minimal mass NLFS}$.}
	We will show that for any $(t_n)_{n\geq 1}$ satisfying $t_n \uparrow T$, there exist a subsequence still denoted by $(t_n)_{n\geq 1}$, sequences of $\theta_n \in \R, \lambda_n>0$ and $x_n \in \R^d$ such that
	\begin{align}
	e^{it\theta_n} \lambda^{\frac{d}{2}}_n u(t_n, \lambda_n \cdot + x_n) \rightarrow Q \text{ strongly in } H^s \text{ as } n \rightarrow \infty. \label{limiting profile minimal mass NLFS}
	\end{align}
	Let $(t_n)_{n\geq 1}$ be a sequence such that $t_n \uparrow T$. Set
	\[
	\lambda_n := \left(\frac{\|Q\|_{\dot{H}^s}}{\|u(t_n)\|_{\dot{H}^s}}\right)^{\frac{1}{s}}, \quad v_n(x):= \lambda_n^{\frac{d}{2}} u(t_n, \lambda_n x).
	\]
	By the blow-up alternative, we see that $\lambda_n \rightarrow 0$ as $n \rightarrow \infty$. Moreover, we have
	\begin{align}
	\|v_n\|_{L^2} = \|u(t_n)\|_{L^2} = \|u_0\|_{L^2}=\|Q\|_{L^2},  \label{property v_n masscritical NLFS}
	\end{align}
	and
	\begin{align}
	\|v_n\|_{\dot{H}^s} = \lambda_n^s \|u(t_n)\|_{\dot{H}^s} = \| Q\|_{\dot{H}^s}, \label{property v_n masscritical 1 NLFS}
	\end{align}
	and
	\[
	E(v_n) = \lambda_n^{2s} E(u(t_n)) = \lambda_n^{2s} E(u_0) \rightarrow 0, \quad \text{as } n \rightarrow \infty. 
	\]
	This yields in particular that
	\begin{align}
	\|v_n\|^{\frac{4s}{d}+2}_{L^{\frac{4s}{d}+2}} \rightarrow \frac{d+2s}{d} \|Q\|^2_{\dot{H}^s}, \quad \text{as } n \rightarrow \infty. \label{convergence v_n masscritical NLFS}
	\end{align}
	The sequence $(v_n)_{n\geq 1}$ satisfies the conditions of Theorem $\ref{theorem compactness lemma masscritical NLFS}$ with 
	\[
	m^{\frac{4s}{d}+2} = \frac{d+2s}{d} \|Q\|^2_{\dot{H}^s}, \quad M^2 = \|Q\|^2_{\dot{H}^s}. 
	\]
	Therefore, there exists a sequence $(x_n)_{n\geq 1}$ in $\R^d$ such that up to a subsequence,
	\[
	v_n(\cdot + x_n)  = \lambda_n^{\frac{d}{2}} u(t_n, \lambda_n \cdot + x_n) \rightharpoonup V \text{ weakly in } H^s,
	\]
	as $n \rightarrow \infty$ with $\|V\|_{L^2} \geq \|Q\|_{L^2}$. 
	Since $v_n(\cdot +x_n) \rightharpoonup V$ weakly in $H^s$ as $n\rightarrow \infty$, the semi-continuity of weak convergence and $(\ref{property v_n masscritical NLFS})$ imply
	\[
	\|V\|_{L^2} \leq \liminf_{n\rightarrow \infty} \|v_n\|_{L^2} \leq \|Q\|_{L^2}. 
	\]
	This together with the fact $\|V\|_{L^2} \geq \|Q\|_{L^2}$ show that
	\begin{align}
	\|V\|_{L^2} = \|Q\|_{L^2} = \lim_{n\rightarrow \infty} \|v_n\|_{L^2}. \label{l2 norm v_n masscritical NLFS}
	\end{align}
	Therefore
	\[
	v_n(\cdot +x_n) \rightarrow V \text{ strongly in } L^2 \text{ as } n\rightarrow \infty. 
	\]
	On the other hand, the Gagliardo-Nirenberg inequality $(\ref{sharp gagliardo nirenberg inequality masscritical NLFS})$ shows that $v_n(\cdot +x_n) \rightarrow V$ strongly in $L^{\frac{4s}{d}+2}$ as $n \rightarrow \infty$. Indeed, by $(\ref{property v_n masscritical 1 NLFS})$,
	\begin{align*}
	\|v_n(\cdot +x_n) -V\|^{\frac{4s}{d}+2}_{L^{\frac{4s}{d}+2}} &\lesssim \|v_n(\cdot + x_n) - V\|^{\frac{4s}{d}}_{L^2} \|v_n(\cdot + x_n) - V\|^2_{\dot{H}^s} \\
	&\lesssim (\|Q\|_{\dot{H}^s} + \|V\|_{\dot{H}^s})^2 \|v_n(\cdot +x_n) -V\|^{\frac{4s}{d}}_{L^2} \rightarrow 0,
	\end{align*}
	as $n\rightarrow \infty$. Moreover, using $(\ref{convergence v_n masscritical NLFS})$ and $(\ref{l2 norm v_n masscritical NLFS})$, the sharp Gagliardo-Nirenberg inequality $(\ref{sharp gagliardo nirenberg inequality masscritical NLFS})$ yields
	\begin{align*}
	\|Q\|^2_{\dot{H}^s} = \frac{d}{d+2s} \lim_{n\rightarrow \infty} \|v_n\|^{\frac{4s}{d}+2}_{L^{\frac{4s}{d}+2}} = \frac{d}{d+2s} \|V\|^{\frac{4s}{d}+2}_{L^{\frac{4s}{d}+2}} \leq \Big(\frac{\|V\|_{L^2}}{\|Q\|_{L^2}} \Big)^{\frac{4s}{d}} \|V\|^2_{\dot{H}^s} = \|V\|^2_{\dot{H}^s},
	\end{align*}
	or $\|Q\|_{\dot{H}^s} \leq \|V\|_{\dot{H}^s}$. By the semi-continuity of weak convergence and $(\ref{property v_n masscritical 1 NLFS})$, 
	\[
	\|V\|_{\dot{H}^s} \leq \liminf_{n\rightarrow \infty} \|v_n\|_{\dot{H}^s} = \|Q\|_{\dot{H}^s}. 
	\]
	Therefore,
	\begin{align}
	\|V\|_{\dot{H}^s} = \|Q\|_{\dot{H}^s} = \lim_{n\rightarrow \infty} \|v_n\|_{\dot{H}^s}.\label{H dot s norm v_n masscritical NLFS}
	\end{align}
	Combining $(\ref{l2 norm v_n masscritical NLFS}), (\ref{H dot s norm v_n masscritical NLFS})$ and using the fact $v_n(\cdot + x_n) \rightharpoonup V$ weakly in $H^s$, we conclude that 
	\[
	v_n(\cdot + x_n) \rightarrow V \text{ strongly in } H^s \text{ as } n\rightarrow \infty.
	\]
	In particular, we have
	\[
	E(V) = \lim_{n\rightarrow \infty} E(v_n) =0.
	\]
	This shows that there exists $V \in H^s$ such that
	\[
	\|V\|_{L^2} = \|Q\|_{L^2}, \quad E(V) =0.
	\]
	By Lemma $\ref{lem characterization minimal mass NLFS}$, we have $V(x) = e^{i\theta} \lambda^{\frac{d}{2}} Q(\lambda x +x_0)$ for some $\theta \in \R, \lambda>0$ and $x_0 \in \R^d$. Thus
	\[
	v_n(\cdot + x_n) = \lambda_n^{\frac{d}{2}} u(t_n, \lambda_n \cdot + x_n) \rightarrow V = e^{i\theta} \lambda^{\frac{d}{2}} Q(\lambda \cdot +x_0) \text{ strongly in } H^s \text{ as } n \rightarrow \infty.
	\]
	Redefining variables as
	\[
	\tilde{\lambda}_n:= \lambda_n \lambda^{-1}, \quad \tilde{x}_n:= \lambda_n \lambda^{-1} x_0 +x_n,
	\]
	we get
	\[
	e^{-i\theta} \tilde{\lambda}^{\frac{d}{2}}_n u(t_n, \tilde{\lambda}_n \cdot + \tilde{x}_n) \rightarrow Q \text{ strongly in } H^s \text{ as } n\rightarrow \infty.
	\]
	This proves $(\ref{limiting profile minimal mass NLFS})$ and the proof is complete.
\defendproof

\section*{Acknowledgments}
The author would like to express his deep gratitude to his wife - Uyen Cong for her encouragement and support. He would like to thank his supervisor Prof. Jean-Marc Bouclet for the kind guidance and constant encouragement. He also would like to thank the reviewer for his/her helpful comments and suggestions. 

% Bibliography-----------------------------------------------------------------

\end{document}